\numberwithin{equation}{section}
\newtheorem{theorem}{Theorem}[section]
\newtheorem{proposition}[theorem]{Proposition}
\newtheorem{corollary}[theorem]{Corollary}
\theoremstyle{definition}
\newtheorem{remark}[theorem]{Remark}
\renewcommand{\epsilon}{\varepsilon}
\renewcommand{\Re}{\textnormal{Re}}
\title[Tube stability for DNLS]{The stability of degenerate solitons for derivative nonlinear Schr\"odinger equations}
\author{TaeGyu Kim}
\subjclass{Primary: 35Q55, 35A15}                                
\keywords{derivative nonlinear Schr\"odinger equation, variational methods.}
\begin{document}
\maketitle

\begin{abstract}
	In this paper, we consider the following nonlinear Schr\"odinger equation with derivative:
	\begin{align*}
		i\partial_tu+\partial_{xx}u+i|u|^{2}\partial_xu+b|u|^4u=0, \quad (t,x) \in \mathbb{R}\times\mathbb{R}, \quad b\geq 0.
	\end{align*}
	For the case $b=0$, the original DNLS, Kwon and Wu \cite{KwonWu2018} proved the conditional orbital stability of degenerate solitons including scaling, phase rotation, and spatial translation with a non-smallness condition, $\|u(t)\|_{L^6}^6> \sqrt{\delta}$. In this paper, we remove this condition for the non-positive initial energy and momentum, and we extend the stability result for $b\geq0$.
\end{abstract}

\section{Introduction}
We consider the initial value problem 
\begin{align}\label{dnls}
\begin{cases}
i\partial_tu+\partial_{xx}u+i|u|^{2}\partial_xu+b|u|^4u=0, \quad (t,x) \in \mathbb{R}\times\mathbb{R}
\\
u(0)=u_0
\end{cases}
\end{align}
where $b\geq0$ and $u: \mathbb{R}\times \mathbb{R}\to \mathbb{C}$. It is well-known that \eqref{dnls} is locally well-posed in the energy space $H^1(\mathbb{R})$ (see \cite{Ozawa1996}). The equation \eqref{dnls} has the following three conserved quantities
\begin{align}
	E(u)&={1\over 2}\|\partial_x u\|_{L^2}^2-{1\over 4}(i|u|^{2}\partial_x u, u)_{L^2} -{b\over6}\|u\|_{L^6}^6, \tag{Energy}
	\\
	M(u)&=\|u\|_{L^2}^2, \tag{Mass}
	\\
	P(u)&=(i\partial_x u, u)_{L^2} \tag{Mommentum}
\end{align}
where the inner product $(\cdot, \cdot)_{L^2}$ is defined by
\begin{align*}
	(f,g)_{L^2}=\Re \int_{\mathbb{R}}v(x)\overline{w(x)}dx.
\end{align*}
When $b=0$, the equation
\begin{align}\label{DNLSoriginal}
	i\partial_tu+\partial_{xx}u+i|u|^{2}\partial_xu=0, \quad (t,x) \in \mathbb{R}\times\mathbb{R} 
\end{align}
is known as a standard derivative nonlinear Schr\"odinger equation and sometimes referred to as the Chen-Lee-Liu equation \cite{ChenLeeLiu1979}. The equation \eqref{DNLSoriginal} is a gauge equivalent form of the well-known derivative nonlinear Schr\"odinger equation
\begin{align}\label{DNLSoriginGauge}
	i\partial_t\psi+\partial_{xx}\psi+i\partial_x(|\psi|^2\psi)=0, \quad (t,x) \in \mathbb{R}\times\mathbb{R} \tag{DNLS}
\end{align}
after applying the following gauge tranformation to the solution of \eqref{DNLSoriginal}
\begin{align*}
	\psi(t,x)=u(t,x)\exp\left(-\frac{i}{2}\int_{-\infty}^x|u(t,x)|^2dx\right).
\end{align*}
\eqref{DNLSoriginGauge} was introduced as a model in plasma physics \cite{Mio1975ModifiedNS,mjlhus_1976} and shown to be completely integrable \cite{KaupNewell1978}. We can consider the equation \eqref{dnls} as a generalization of \eqref{DNLSoriginGauge}.

The equation \eqref{dnls} has two parameter family of solitons
\begin{align*}
	u_{\omega,c}(t,x)=e^{i\omega t}\phi_{\omega,c}(x-ct)
\end{align*}
where $\phi_{\omega,c}$ is given by \eqref{2solitonForm}. For $b\geq0$, there exists a unique $\kappa_0=\kappa_0(b)\in(0,1]$ such that
\begin{align*}
	E(\phi_{\omega,2\kappa_0\sqrt{\omega}})=
	P(\phi_{\omega,2\kappa_0\sqrt{\omega}})=0,
\end{align*}
which means that the soliton $\phi_{\omega,2\kappa_0\sqrt{\omega}}$ corresponds to the degenerate case. If $b=0$, $\kappa_0(0)=1$ and this means that the algebraic solitons of \eqref{DNLSoriginal} are the degenerate case. We also note that $0<\kappa_0(b)<1$ for $b>0$, and the degenerate solitons have exponential decay at space infinity. In this paper, by using the variational characterization introduced in \cite{KwonWu2018} with the argument used in (\cite{Hayashi2021}, Theorem 1.7-(vi)), we establish the orbital stability of the degenerate solitons including scaling, phase rotation, and spatial translation for non-positive initial energy and momentum.

For $b=0$, Guo and Wu \cite{GuoWu1995} show orbital stability for $c<0$ and $\omega>c^2/4$. After this, Colin and Ohta \cite{ColinOhta2006} proved the orbital stability of solitary waves for $\omega>c^2/4$ using the variational method in \cite{Shatah1983}. For the end-point case $c=2\sqrt{\omega}$, the conditional result for orbital stability of solitary waves was proved by Kwon and Wu \cite{KwonWu2018}. Besides these, there are stability or instability results for DNLS with various $b$ in \cite{Ohta2014,NingOhtaWu2017,Ning2020,Hayashi2022negativeb,FukayaHayashi2022}, and for gDNLS in \cite{LiuSimpsonSulem2013,Guo2018,MiaoTangXu2023}, or references therein. 

As mentioned above, Kwon and Wu \cite{KwonWu2018} obtained conditional orbital stability (up to symmetries) of the degenerate soliton, in the sense that $|u(t)|_{L^6}$ did not have to be small when $b=0$. 
\begin{theorem}[\cite{KwonWu2018}]
	Let $b=0$ and $u(t)$ be the solution to the \eqref{DNLSoriginGauge} wtih $u(0)=u_0$. For any $\epsilon>0$, there exists a $\delta=\delta(\epsilon) $such that if
	\begin{align*}
		\|u_0-\phi_{1,2}\|_{H^1}\leq \delta,
	\end{align*}
	then for any $t\in I_{\text{max}}$ (the maximal lifespan), either
	\begin{align*}
		\|u(t)\|_{L^6}^6\leq \sqrt{\delta}
	\end{align*}
	or there exist $\theta=\theta(t)\in[0,2\pi)$, $y=y(t)\in\mathbb{R}$, and $\lambda=\lambda(t)\in [\lambda_0, \infty)$ for some constant $\lambda_0>0$, such that
	\begin{align*}
		\left\|u(t)-e^{i\theta}\lambda^{1/2}\phi_{1, 2}\left(\lambda(\ \cdot -y)\right)\right\|_{H^1}<\epsilon.
	\end{align*}
\end{theorem}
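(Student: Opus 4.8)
The plan is to derive the conditional stability from a variational characterization of the degenerate soliton combined with the conservation of $E$, $M$, $P$, organized around the $L^2$-critical scaling of \eqref{DNLSoriginGauge}. Writing $\psi\mapsto\psi_\lambda:=\lambda^{1/2}\psi(\lambda\,\cdot)$ for the mass-preserving scaling, a direct computation gives $M(\psi_\lambda)=M(\psi)$, $P(\psi_\lambda)=\lambda P(\psi)$, $E(\psi_\lambda)=\lambda^2E(\psi)$, so that the degenerate solitons — all of which satisfy $E=P=0$ by hypothesis — form a single scaling orbit and, together with phase rotation and translation, sweep out the noncompact ``tube''
\[
\mathcal T=\bigl\{\,e^{i\theta}\lambda^{1/2}\phi_{1,2}\!\left(\lambda(\cdot-y)\right):\theta\in[0,2\pi),\ y\in\mathbb R,\ \lambda>0\,\bigr\},
\]
on which $M$ is constantly $M(\phi_{1,2})$ and $E\equiv P\equiv0$. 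The closeness alternative in the theorem will be reduced to the statement that $H^1$ functions whose conserved quantities are close to those of $\phi_{1,2}$ and which do not ``vanish'' are $H^1$-close to $\mathcal T$.

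\emph{Step 1: variational characterization.} I would establish
\[
m:=\inf\bigl\{\,M(\psi):\psi\in H^1(\mathbb R)\setminus\{0\},\ E(\psi)\le0,\ P(\psi)\le0\,\bigr\}=M(\phi_{1,2}),
\]
attained exactly on $\mathcal T$, in a quantitative form: for every $\epsilon>0$ there is $\eta>0$ so that $M(\psi)\le m+\eta$, $|E(\psi)|\le\eta$, $|P(\psi)|\le\eta$ and a nonvanishing bound on $\psi$ imply $\operatorname{dist}_{H^1}(\psi,\mathcal T)<\epsilon$. That $m>0$ follows from the sharp Gagliardo--Nirenberg inequality underlying the $L^2$-mass threshold for \eqref{DNLSoriginGauge}: if $E(\psi)\le0$ and $P(\psi)\le0$, then $\|\partial_x\psi\|_{L^2}^2$ is bounded by a superquadratic power of itself times a power of $M(\psi)$, which forces $M(\psi)$ to be bounded below. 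For the identification $m=M(\phi_{1,2})$ and the description of minimizers I would run a concentration--compactness argument on a minimizing sequence: use the scaling to fix a gauge (e.g.\ normalize $\|\psi_n\|_{L^6}=1$), which simultaneously excludes vanishing; rule out dichotomy by strict subadditivity of the constrained infimum under the one-parameter scaling; obtain strong $H^1$ convergence after translation to a minimizer $\psi_\star$; and finally use the Euler--Lagrange equation $S'_{\omega,c}(\psi_\star)=0$ with $S_{\omega,c}=E+\tfrac\omega2M+\tfrac c2P$ (the multipliers renamed $\omega,c$), i.e.\ the profile equation for $\phi_{\omega,c}$, together with the fact that the constraints $E(\psi_\star)=P(\psi_\star)=0$ are active, which forces $c=2\sqrt\omega$ (the degenerate branch) and hence $\psi_\star\in\mathcal T$. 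The quantitative version follows from the same scheme run by contradiction.

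\emph{Step 2: from near-minimality to stability.} Let $u_0$ satisfy $\|u_0-\phi_{1,2}\|_{H^1}\le\delta$ and let $u(t)$ be the solution on $I_{\text{max}}$. By conservation and the Lipschitz bounds for $E,M,P$ on $H^1$-balls, $M(u(t))=M(u_0)=M(\phi_{1,2})+O(\delta)$ and $E(u(t))=O(\delta)$, $P(u(t))=O(\delta)$ for every $t$. Thus $u(t)$ is an almost-minimizer for the problem of Step 1, provided it does not vanish; after rescaling by a scale $\mu(t)$ read off from $\|u(t)\|_{L^6}$ (which converts $E,P$ to $\mu(t)^2E(u_0)$ and $\mu(t)P(u_0)$), the smallness of these defects relative to the gauge-fixing normalization is exactly what requires $\|u(t)\|_{L^6}^6$ not to be too small. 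Choosing $\delta$ so that $O(\delta)/\sqrt\delta$ is below the $\eta$ of Step 1 yields the stated dichotomy: either $\|u(t)\|_{L^6}^6\le\sqrt\delta$, or $\operatorname{dist}_{H^1}(u(t),\mathcal T)<\epsilon$, i.e.\ $u(t)$ is within $\epsilon$ of $e^{i\theta}\lambda^{1/2}\phi_{1,2}(\lambda(\cdot-y))$ for some $\theta,y$ and some scale $\lambda=\lambda(t)$; since $\lambda(t)$ is comparable to a fixed power of $\|u(t)\|_{L^6}$, the lower bound $\|u(t)\|_{L^6}^6>\sqrt\delta$ gives $\lambda(t)\ge\lambda_0=\lambda_0(\delta)>0$, while nothing bounds it above. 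The uniform-in-$t$ conclusion is made rigorous in the usual way, by a compactness (contradiction) argument applied to the quantitative statement of Step 1.

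\emph{Main obstacle.} The crux is the $L^2$-criticality of \eqref{DNLSoriginGauge}, equivalently the degeneracy of $\phi_{1,2}$: the infimum $m$ is scaling-invariant, so a minimizing sequence carries no compactness until the scaling symmetry is broken, and even then it may disperse to zero while formally keeping $E,P\le0$ — which is precisely why a quantitative nonvanishing hypothesis on the solution cannot be dispensed with in this argument, and why the conclusion is only ``tube'' stability with an unbounded scaling range. Hence the heart of the proof is the compactness/concentration step for the scaling-normalized minimizing sequences (no loss of mass to infinity once vanishing is excluded), which uses the precise interplay of the three conserved quantities under the one-parameter scaling and the decay of the degenerate profile, together with the rigidity step identifying any minimizer with $\phi_{1,2}$ modulo the symmetries via the classification of bounded solutions of the soliton profile equation. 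The same scheme covers $b\ge0$ upon replacing $\kappa_0(0)=1$ and $\phi_{1,2}$ by $\kappa_0(b)$ and the corresponding (exponentially decaying) degenerate profile.
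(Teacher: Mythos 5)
The paper does not actually prove this statement --- it is quoted from \cite{KwonWu2018} --- so the relevant comparison is with the machinery the paper sets up in Section 2, which mirrors the Kwon--Wu argument: one works with the two-parameter functionals $S_{\omega,c}$ and the Nehari-type constraint $K_{\omega,c}=0$, chooses the scale $\lambda(t)$ (and a frequency $\omega$) adapted to $u(t)$ so that $K_{\omega}(u_{\lambda})\to 0$ and $S_{\omega}(u_{\lambda})\to\mu_{\omega}$ as $\delta\to 0$ \emph{provided} $\|u(t)\|_{L^6}^6>\sqrt{\delta}$ (this is exactly where the rescaled errors $\lambda^2E(u_0)=O(\delta)/\|u\|_{L^6}^6$ and $\lambda P(u_0)$ become small), and then invokes the compactness of minimizing sequences (Proposition \ref{prop uniqueness}). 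Your outline has the correct skeleton --- the scaling laws $M(\psi_\lambda)=M(\psi)$, $P(\psi_\lambda)=\lambda P(\psi)$, $E(\psi_\lambda)=\lambda^2E(\psi)$, the reduction of the dichotomy to an almost-minimizer statement, the origin of the $\sqrt{\delta}$ threshold, and the lower bound $\lambda\geq\lambda_0(\delta)$ --- but you route the variational characterization through the mass-rigidity ``$E\leq 0$, $P\leq 0$, $M=M(\phi_{1,2})$ implies soliton'' (Theorem \ref{theorem minimum}), which is the Hayashi-style argument this paper uses for \emph{its} main theorem under the extra sign hypotheses, not the route of the cited theorem.

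This substitution creates a genuine gap. The hypothesis here is only $\|u_0-\phi_{1,2}\|_{H^1}\leq\delta$, so $E(u_0)$ and $P(u_0)$ are merely $O(\delta)$ and may well be \emph{positive}; your Step 1 is a sign-constrained minimization, and the ``quantitative version'' with two-sided bounds $|E|\leq\eta$, $|P|\leq\eta$ is asserted to ``follow from the same scheme run by contradiction.'' It does not follow routinely: in the concentration--compactness limit one must show that the weak limit of the $L^6$-normalized, translated sequence still satisfies $E\leq 0$ and $P\leq 0$, i.e.\ that the residual in the Brezis--Lieb splitting cannot carry away negative energy or momentum while retaining a nontrivial fraction of the mass. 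The functional inequality needed to exclude this involves all three conserved quantities simultaneously and is precisely the sharp threshold statement, so crude Gagliardo--Nirenberg bounds on the residual (whose mass may be comparable to $M^*$) do not close the argument; this is the heart of the proof, not a corollary of the $E\leq0$, $P\leq 0$ case. That positive energy is the delicate direction is visible in the paper itself: Theorem \ref{MainTheorem} assumes $E(u_0)\leq 0$, $P(u_0)\leq 0$, and Corollary \ref{MainCorollary} together with Remarks \ref{3remark} and \ref{3remark2} show that only a limited amount of positivity, measured against $(P(u_0))^2/M(u_0)$, can be absorbed before the conclusion fails. You would either need to carry out this two-sided compactness step in detail, or switch to the $S_{\omega,c}$/$K_{\omega,c}$ formulation, where the sign of $E(u_0)$ plays no role because only the smallness of $K_\omega(u_\lambda)$ and of $S_\omega(u_\lambda)-\mu_\omega$ enters.
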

In our main theorem, we resolve the non-smallness condition, $\|u(t)\|_{L^6}^6> \sqrt{\delta}$, for non-positive initial energy and momentum.
\begin{theorem}\label{MainTheorem}
	Let $b\geq 0$ and $u(t)$ be the solution to the \eqref{dnls} wtih $u(0)=u_0$. For any $\epsilon>0$, there exists a $\delta=\delta(\epsilon)$ such that if $M(\phi_{1, 2\kappa_0})<M(u_0)<M(\phi_{1, 2\kappa_0})+\delta$, $E(u_0)\leq 0$, and $P(u_0)\leq 0$, then there exist $\theta=\theta(t)\in[0,2\pi)$ and $y=y(t)\in\mathbb{R}$, such that for any $t\in I_{\text{max}}$ (the maximal lifespan)
	\begin{align}
		\left\|u(t)-e^{i\theta}\lambda^{1/2}\phi_{\omega, 2\kappa_0\sqrt{\omega}}\left( \lambda(\ \cdot -y(t))\right)\right\|_{H^1}<\epsilon
	\end{align}
	where $\lambda=\lambda_{\omega, u}(t)$ is given by
	\begin{align*}
		\lambda^2=\frac{\|u(t)\|_{\dot{H}^1}^2+{b\over 3}\|u(t)\|_{L^6}^6}{\omega\|\phi_{1, 2\kappa_0}\|_{L^2}^2}.
	\end{align*}
\end{theorem}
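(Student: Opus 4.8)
The plan is to follow the concentration-compactness / variational route: characterize the degenerate soliton profile $\phi_{1,2\kappa_0}$ (up to the symmetries of scaling, phase, and translation) as the unique minimizer of a suitable functional constrained to a submanifold determined by the conservation laws, then show that any solution with data satisfying $M(\phi_{1,2\kappa_0})<M(u_0)<M(\phi_{1,2\kappa_0})+\delta$, $E(u_0)\le 0$, $P(u_0)\le 0$ stays, for all time, inside an $\epsilon$-tube around the orbit of that minimizer. Concretely, I would first recall from \cite{KwonWu2018} the variational functional (essentially a Gagliardo–Nirenberg-type quantity built from $\|\partial_x u\|_{L^2}^2+\frac b3\|u\|_{L^6}^6$, the momentum term, and the mass) and show, using the explicit form \eqref{2solitonForm} of $\phi_{\omega,c}$, that at the degenerate parameter $c=2\kappa_0\sqrt\omega$ the relations $E(\phi)=P(\phi)=0$ force a rigidity: the profile saturates a sharp inequality among $E$, $P$, and $M$, and equality is attained only on the family $e^{i\theta}\lambda^{1/2}\phi_{\omega,2\kappa_0\sqrt\omega}(\lambda(\cdot-y))$. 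The scaling invariance is why $\lambda$ appears freely in the conclusion and is pinned down by the formula $\lambda^2 = \big(\|u\|_{\dot H^1}^2+\tfrac b3\|u\|_{L^6}^6\big)/(\omega\|\phi_{1,2\kappa_0}\|_{L^2}^2)$; the mass then selects $\omega$ (or equivalently fixes the product $\lambda^2\omega$), so the almost-minimality of $u_0$ translates into $u(t)$ being almost a minimizer for each $t$.

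Next I would run the compactness argument in the spirit of (\cite{Hayashi2021}, Theorem 1.7-(vi)): suppose the conclusion fails, so there is a sequence of data $u_{0,n}$ with $M(u_{0,n})\downarrow M(\phi_{1,2\kappa_0})$, $E(u_{0,n})\le 0$, $P(u_{0,n})\le 0$, and times $t_n$ with $\mathrm{dist}_{H^1}(u_n(t_n), \mathcal{T})\ge \epsilon_0$, where $\mathcal T$ is the scaling-phase-translation orbit. By the conservation laws $E(u_n(t_n))\le 0$, $P(u_n(t_n))\le 0$, and $M(u_n(t_n))\to M(\phi_{1,2\kappa_0})$, so $v_n := u_n(t_n)$ is a minimizing sequence for the variational problem. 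The key analytic input is a profile decomposition / concentration-compactness lemma for this functional: using the sharp Gagliardo–Nirenberg inequality that controls the cubic derivative term and the $L^6$ term by $\|\partial_x u\|_{L^2}$ and $\|u\|_{L^2}$, together with the fact that $E\le 0$ and $P\le 0$ rule out both vanishing (which would force $E,P\ge0$ with equality only at zero, contradicting the mass lower bound) and dichotomy (sub-additivity of the minimization is strict because the degenerate soliton sits exactly at the threshold mass), one extracts, after applying phase, scaling and translation symmetries, a subsequence converging strongly in $H^1$ to a minimizer, which by the rigidity step must lie on $\mathcal T$ — contradicting $\mathrm{dist}_{H^1}(v_n,\mathcal T)\ge\epsilon_0$.

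The main obstacle, and where the hypotheses $E(u_0)\le 0$ and $P(u_0)\le 0$ do the real work, is precisely ruling out the "escape to the $L^6$-large regime" that forced the dichotomous alternative $\|u(t)\|_{L^6}^6\le\sqrt\delta$ in \cite{KwonWu2018}: one must show the functional has no non-compact minimizing sequence, i.e. that the infimum is not also approached by bubbles with $\|u\|_{L^6}$ staying bounded away from the soliton value. I expect to handle this by exploiting the sign conditions to get a two-sided bound on $\|u(t)\|_{\dot H^1}^2+\tfrac b3\|u(t)\|_{L^6}^6$ in terms of the mass excess $\delta$ (from $E\le0$, $P\le0$ one derives an upper bound matching the soliton value up to $O(\sqrt\delta)$, and the Gagliardo–Nirenberg inequality gives the lower bound), so that the scaling parameter $\lambda_{\omega,u}(t)$ stays in a compact interval $[\lambda_0,\lambda_1]$ and the would-be pathological regime is excluded a priori. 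Once the minimizing sequence is genuinely $H^1$-precompact modulo symmetries, the remaining steps — extracting $\theta(t)$, $y(t)$ by a standard modulation/implicit-function argument and verifying the explicit formula for $\lambda$ by testing the equation against the scaling generator — are routine. Secondary technical points to check carefully are the uniqueness (up to symmetries) of the degenerate profile among all $H^1$ minimizers, for which I would use the known classification of solitons for \eqref{dnls}, and the continuity in $t$ of the modulation parameters, which follows from local well-posedness in $H^1$.
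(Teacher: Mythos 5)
Your overall skeleton (variational characterization of $\phi_{\omega,2\kappa_0\sqrt{\omega}}$ as a constrained minimizer, plus compactness of minimizing sequences modulo phase, translation and scaling) matches the paper, but there is a genuine gap exactly where the paper does its real work. Having reduced to a sequence $v_n$ with $E(v_n)\le 0$, $P(v_n)\le 0$, $M(v_n)\downarrow M^*$, you assert that ``the almost-minimality of $u_0$ translates into $u(t)$ being almost a minimizer for each $t$.'' This is not automatic: after rescaling by $\lambda_0$ one has
$K_{\omega}(v_{\lambda_0}) = 4\lambda_0^2E(v)+\omega(M(v)-M^*)+2\kappa_0\sqrt{\omega}\,\lambda_0 P(v)$ and $2S_\omega(v_{\lambda_0})-\omega M^* = 2\lambda_0^2 E(v) + \omega(M(v)-M^*) + 2\kappa_0\sqrt{\omega}\,\lambda_0P(v)$, so the sign hypotheses only give one-sided control; to conclude that $v_{\lambda_0}$ is an almost-minimizing sequence for $\mu_\omega$ one must prove the lower bounds $\lambda_0^2E(v)>-\epsilon$ and $\lambda_0 P(v)>-\epsilon$. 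The paper proves the first by contradiction using the rigidity result of Hayashi (Theorem \ref{theorem minimum}: $E\le0$, $P\le0$, $M=M^*$ forces the soliton) together with an analysis of $N_1,N_2$, and the second by a separate contradiction argument exploiting the freedom in $\omega$ (tuning $\omega_0$ so that $K_{\omega_0}((u_n)_{\widetilde{\lambda_n}})=0$ while $S_{\omega_0}((u_n)_{\widetilde{\lambda_n}})<\mu_{\omega_0}$). Neither of these steps, nor any substitute for them, appears in your proposal.

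Moreover, the step you identify as the ``main obstacle'' is resolved by a claim that is false. You propose to derive from $E\le0$, $P\le0$ a two-sided bound on $\|u(t)\|_{\dot H^1}^2+\tfrac b3\|u(t)\|_{L^6}^6$ matching the soliton value up to $O(\sqrt\delta)$, so that $\lambda_{\omega,u}(t)$ stays in a compact interval. No such upper bound holds above the threshold mass: the sharp Gagliardo--Nirenberg inequality no longer controls $\|\partial_x u\|_{L^2}$ by the conserved quantities when $M(u_0)>M^*$, and such a bound would give an a priori $H^1$ bound and hence global existence, which the paper explicitly leaves open for $b>0$ in this mass regime. The theorem is formulated precisely so that this is not needed: $\lambda=\lambda_{\omega,u}(t)$ is allowed to be arbitrarily large, and the point of including the scaling in the tube is that $\|u(t)\|_{\dot H^1}\to\infty$ corresponds to a concentrating soliton rather than an exit from the tube. (Your last sentence about extracting $\theta(t),y(t)$ by an implicit-function argument is also unnecessary here; the paper simply applies the static Proposition \ref{theroem decompose} at each time via conservation of $M$, $E$, $P$.)
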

We note that $\lambda\sim \|u(t)\|_{\dot{H}^1}$ by the Gagliardo–Nirenberg inequality, and exactly same as $\lambda=\frac{\|u(t)\|_{\dot{H}^1}}{\|\phi_{\omega, 2\kappa_0\sqrt{\omega}}\|_{\dot{H}^1}}$ when $b=0$. To prove Theorem \ref{MainTheorem}, we utilize the argument used in (\cite{Hayashi2021}, Theorem 1.7-(vi)), based on the variational argument in \cite{KwonWu2018}. By a further refinement of the proof of Theorem \ref{MainTheorem}, the following result can be obtained.
\begin{corollary}\label{MainCorollary}
	For $E(u_0)<(\kappa_0\sqrt{1+\kappa_0^2}-\kappa_0^2)\frac{(P(u_0))^2}{M(u_0)}$ instead of $E(u_0)\leq0$, Theorem \ref{MainTheorem} still holds true.
\end{corollary}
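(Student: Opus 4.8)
The plan is to re-run the proof of Theorem~\ref{MainTheorem}, replacing the conserved energy $E$, wherever the hypothesis $E(u_0)\le0$ was used, by the conserved functional
\[
\widetilde E(u):=E(u)-c_0\,\frac{P(u)^2}{M(u)},\qquad c_0:=\kappa_0\sqrt{1+\kappa_0^2}-\kappa_0^2>0,
\]
so that the hypothesis of Corollary~\ref{MainCorollary} reads simply $\widetilde E(u_0)<0$, the sign condition $P(u_0)\le0$ still being assumed and playing exactly the role it does in Theorem~\ref{MainTheorem}. Like $E$, $M$ and $P$, the functional $\widetilde E$ is conserved under the flow; it vanishes on every degenerate soliton, since $E(\phi_{\omega,2\kappa_0\sqrt\omega})=P(\phi_{\omega,2\kappa_0\sqrt\omega})=0$; and it is invariant under the scaling $u\mapsto\mu^{1/2}u(\mu\,\cdot)$ that governs $\lambda$, because under it $E\mapsto\mu^2E$, $P\mapsto\mu P$ and $M$ is unchanged --- just as $E\le0$ was scale invariant. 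Hence the two preliminary reductions of the proof of Theorem~\ref{MainTheorem} carry over verbatim: by conservation of $M$, $E$ and $P$ it suffices to prove the static statement that, for $\delta$ small, every $u\in H^1$ with $M_0<M(u)<M_0+\delta$ (here $M_0:=M(\phi_{1,2\kappa_0})$), $P(u)\le0$ and $\widetilde E(u)<0$ is within $\epsilon$, in $H^1$, of the orbit $\{e^{i\theta}\lambda^{1/2}\phi_{1,2\kappa_0}(\lambda(\cdot-y))\}$; and, by the scaling, we may normalize $\lambda=1$, i.e.\ $\|u\|_{\dot H^1}^2+\tfrac b3\|u\|_{L^6}^6=M_0$, the de-normalization at the very end being harmless because the Gagliardo--Nirenberg inequality, the a priori $H^1$ bound coming from the smallness of the mass, and $M(u)<M_0+\delta$ keep $\lambda$ between two positive constants.

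The substance is a momentum-refinement of the \emph{sharp} functional inequality underlying the variational characterization of \cite{KwonWu2018}. On the normalized set $\mathcal M:=\{\,\|u\|_{\dot H^1}^2+\tfrac b3\|u\|_{L^6}^6=M_0\,\}$ one has the identity $\tfrac14(i|u|^2\partial_x u,u)_{L^2}+\tfrac b3\|u\|_{L^6}^6=\tfrac12M_0-E(u)$, and the proof of Theorem~\ref{MainTheorem} extracts from $E(u)\le0$ and $P(u)\le0$ a bound $\mathcal N(u)\le M(u)-M_0$, where $\mathcal N\ge0$ vanishes precisely on $\{e^{i\theta}\phi_{1,2\kappa_0}(\cdot-y)\}$ and is coercive transverse to that orbit (modulo the symmetries). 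What I would prove instead is the sharp lower bound on $\mathcal M$ of $\tfrac12M_0-E(u)$ against $P(u)^2/M(u)$, with optimal constant exactly $c_0$, so that $\widetilde E(u)<0$ together with $P(u)\le0$ still forces $\mathcal N(u)\le M(u)-M_0<\delta$. I expect the constant $\kappa_0\sqrt{1+\kappa_0^2}-\kappa_0^2$ to emerge from a Bogomol'nyi-type square completion --- expanding $\tfrac12\|\partial_x u+i(\alpha u+\beta|u|^2 u)\|_{L^2}^2\ge0$ and choosing $\alpha,\beta$ so that the cross terms reproduce $(i|u|^2\partial_x u,u)_{L^2}$, $P(u)$ and $\|u\|_{L^6}^6$ while the remainder is controlled on $\mathcal M$ --- or, equivalently, by matching the behaviour of the conserved quantities along the soliton family $\phi_{\omega,c}$ as $c\to2\kappa_0\sqrt\omega$, the specific value of $\kappa_0$ (hence of $\sqrt{1+\kappa_0^2}$) being what makes these match.

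Granting the refined inequality, the concentration--compactness and modulation part of the proof of Theorem~\ref{MainTheorem} goes through with $\widetilde E$ in place of $E$. Dichotomy of a minimizing sequence is still excluded, because $\widetilde E$ is super-additive under splitting into disjointly supported pieces: $E$ is additive there, and $-c_0P^2/M$ is super-additive by Cauchy--Schwarz, $(P_1+P_2)^2/(M_1+M_2)\le P_1^2/M_1+P_2^2/M_2$, so the normalized minimization problem remains strictly sub-additive just as before. Vanishing is excluded as before, since $\|u\|_{\dot H^1}$ is bounded below on $\mathcal M$; and the surviving profile is, by coercivity of $\mathcal N$, a degenerate soliton modulo phase and translation. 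Undoing the normalization then yields the stated estimate with the same $\lambda=\lambda_{\omega,u}$.

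The main obstacle is the refined inequality itself: showing that $c_0=\kappa_0\sqrt{1+\kappa_0^2}-\kappa_0^2$ is exactly the admissible constant --- admissibility (that $\widetilde E$ remains bounded below and coercive on $\mathcal M$) bounds $c_0$ from above, while the square completion, or equivalently the soliton-invariant asymptotics near the degenerate parameter, shows that this value is attained --- and then verifying that the second variation of $\widetilde E$ at a degenerate soliton is positive-definite modulo the phase/translation/scaling kernel, so that the compactness step selects precisely that orbit and nothing else. Everything outside this inequality is the same bookkeeping as in the proof of Theorem~\ref{MainTheorem}, with the appearances of $E$ and $P$ repackaged through $\widetilde E$.
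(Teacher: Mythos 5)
Your proposal is a plan rather than a proof: the entire content of the corollary is concentrated in the ``refined sharp inequality'' relating $\tfrac12 M_0-E(u)$ to $P(u)^2/M(u)$ on the normalized set, and you never state that inequality precisely, let alone prove it --- you only conjecture that the constant $\kappa_0\sqrt{1+\kappa_0^2}-\kappa_0^2$ should emerge from a Bogomol'nyi-type square completion or from soliton asymptotics. Neither heuristic matches where the constant actually comes from. In the paper's argument one assumes, for contradiction, a sequence with $\widetilde{\lambda_n}P(u_n)/M(u_n)\to -c$ and $\widetilde{\lambda_n}^2E(u_n)/M(u_n)\to ac^2$, notes that Cauchy--Schwarz ($|P(u)|\le\|u\|_{\dot H^1}\|u\|_{L^2}$) forces $0<c\le1$, solves $2S_{\infty,\omega_0}=\omega_0$ to get $\omega_0=a^2c^2/\kappa_0^2$, and computes
\begin{align*}
K_{\infty,\omega_0}=\Bigl(\tfrac{a}{\kappa_0}+\kappa_0\Bigr)^2c^2-\kappa_0^2c^2-1,
\end{align*}
which is negative for all $0<c\le 1$ exactly when $a<\kappa_0\sqrt{1+\kappa_0^2}-\kappa_0^2$; this contradicts the variational characterization $2\mu_{\omega_0}=\omega_0 M^*$. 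So the constant is an artifact of the crude bound $c\le1$ in the worst case of a quadratic in $\sqrt{\omega}$ --- the paper's Remarks 3.3 and 3.4 explicitly say it is \emph{not} expected to be sharp (the conjectured threshold is $1/2$, and the conclusion provably fails above $1/2$). Your claim that $c_0$ is ``exactly the admissible constant'' of an optimal functional inequality is therefore the wrong target, and chasing it would likely fail.

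Two further points. First, your repackaging through $\widetilde E=E-c_0P^2/M$ obscures that the argument needs \emph{two-sided} control of $\lambda_0^2E(u)$: the lower bound $-\epsilon<\lambda_0^2E(u)$ still comes from the $P\le0$ step of Theorem \ref{MainTheorem} (trivially if $E>0$), while the upper bound $\lambda_0^2E(u)\lesssim\epsilon^2$ is only obtained \emph{after} the momentum bound $|\lambda_0P(u)|\lesssim\epsilon$ is established, by feeding that bound back into the hypothesis $E<c_0P^2/M$; your outline has no analogue of this order of operations. Second, the concentration--compactness machinery you describe (superadditivity of $-c_0P^2/M$ to exclude dichotomy) is unnecessary: once $K_\omega(u_{\lambda_0})\to0$ and $S_\omega(u_{\lambda_0})\to\mu_\omega$ are secured, Proposition \ref{prop uniqueness} already gives the conclusion, exactly as in the proof of Proposition \ref{theroem decompose}.
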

Due to a limitation of our argument, we cannot obtain any number better than $\kappa_0\sqrt{1+\kappa_0^2}-\kappa_0^2$. See Remark \ref{3remark}. From the experience of NLS, we expect that the sharp condition is $E(u_0)\leq \frac{1}{2}\frac{(P(u_0))^2}{M(u_0)}$. At least, we can know that Corollary \ref{MainCorollary} fails if $E(u_0)>\frac{1}{2}\frac{(P(u_0))^2}{M(u_0)}$. See Remark \ref{3remark2}

Indeed, Theorem \ref{MainTheorem} is a kind of classical proximity of the solution. Let us consider the following critical generalized KdV equation and nonlinear Schr\"odinger equation:
\begin{align}\label{gkdv}
	\partial_tu+\partial_{xxx}u+\partial_x(u^5)=0, \quad (t,x) \in \mathbb{R}\times\mathbb{R}, \tag{gKdV}
\end{align}
\begin{align}\label{NLS}
	i\partial_tu+\partial_{xx}u+|u|^4u=0, \quad (t,x) \in \mathbb{R}\times\mathbb{R}. \tag{NLS}
\end{align}
Both \eqref{gkdv} and \eqref{NLS} have a soliton which satisfies
\begin{align*}
	Q_{xx}+Q^5=Q, \quad Q\in H^1(\mathbb{R}).
\end{align*}
For any initial data $u_0$ such that slightly large mass condition $\|Q\|_{L^2}<\|u_0\|_{L^2}<\|Q\|_{L^2}+\delta$ with negative energy $E(u_0)<0$ and zero momentum $P(u_0)=0$ for NLS, the solutions $u$ of \eqref{gkdv} and \eqref{NLS} satisfy the following proximity
\begin{align}\label{proximityofQinNLS}
	\left\|Q-e^{i\theta}\lambda_0^{1/2}u(\lambda_0(\ \cdot -y))\right\|_{H^1}<\epsilon
\end{align}
with $\lambda_0=\frac{\|Q_x\|_{L^2}}{\|u_x\|_{L^2}}$ which is proved in \cite{Merle2001} for gKdV and \cite{MerleRaphael2005} for NLS. As can be seen in \cite{Merle2001} or \cite{MerleRaphael2005}, by using \eqref{proximityofQinNLS}, we can develope a modulation decomposition which is needed for a blow-up result. 

This modulation decomposition argument originated from \cite{MerleRaphael2001}. On the (g)DNLS, some instability results (\cite{GuoNingWu2020,Ning2020,FukayaHayashi2022} and the references therein) were proved by using the modulation decomposition similar to \cite{MerleRaphael2001}. However, the modulation decomposition used in the paper above is not sharp enough to prove the existence of blow-up solution. This is the reason why we need the proximity result such as \eqref{proximityofQinNLS} or Theorem \ref{MainTheorem}.

The global well-posedness of \eqref{dnls} with smallness of mass such as $M(u_0)<M(\phi_{1,2\kappa_0})$ was proved in \cite{Ozawa1996,Wu2013,Wu2015} for $b=0$, and \cite{Hayashi2021} for $b>0$.
Recently, when $b=0$, the global well-posedness of \eqref{DNLSoriginGauge} for large data was proved in many papers, for example \cite{PeliSaalShim2017,JenkinsLiuSulem2020,BahouriPerelman2022,HarKilNteVis2022arxiv}. Since these global results originate from integrability of \eqref{DNLSoriginGauge}, it is still an interesting question whether or not there is a blowup solution for $b>0$ and $M(u_0)>M(\phi_{1,2\kappa_0})$.

In section 2 we briefly review the variational characterization of the solitons. In section 3 we prove Theorem \ref{MainTheorem}.


\section{Variational characterization}
In this section, we consider the variational structure of the solitons. Unless otherwise noted, we assume $b\geq 0$. We define a scaling operator $f_\lambda(x)=\lambda^{1/2} f(\lambda x)$.
As in \cite{ColinOhta2006} and \cite{Hayashi2021}, there exists a two-parameter family of solitary waves
$$u_{\omega,c}(t,x)=e^{i\omega t}\phi_{\omega,c}(x-ct),$$
where $\phi_{\omega,c}(x)\in H^1(\mathbb{R})$ satisfies
\begin{align}\label{2solitonEquaition}
	-\phi^{\prime \prime}+\omega \phi+ic\phi^\prime-i|\phi|^{2}\phi^\prime-b|\phi|^4\phi=0.
\end{align}
The equaition \eqref{2solitonEquaition} is equivalent to $S_{\omega,c}^\prime(\phi)=0$ where
\begin{align*}
	S_{\omega,c}(\phi)
	&=E(\phi)+{\omega\over2}M(\phi)+{c\over2}P(\phi).
\end{align*}
Applying the following gauge transformation to $\phi_{\omega,c}$
\begin{align}\label{2solitonForm}
	\phi_{\omega,c}(x)
	=
	\Phi_{\omega,c}(x)\exp\left(i{c\over 2}x-{i\over 4}\int_{-\infty}^x |\Phi_{\omega,c}(y)|^{2}dy\right),
\end{align}
then $\Phi_{\omega,c}$ satisfies the following
\begin{align}
	\Phi_{\omega,c}^2(x)=
	\begin{cases}
		{2(4\omega-c^2) 
			\over 
			\sqrt{c^2+\gamma(4\omega-c^2)} \cosh(\sqrt{4\omega-c^2} x) -c }, 
		& \text{if } \omega>{c^2\over 4}
		\\
		{4c 
			\over 
			c^2x^2+\gamma},
		& \text{if } \omega={c^2\over 4} \text{ and } c>0,
	\end{cases} 
\end{align}
where $\gamma$ satisfies
\begin{align}\label{3gamma}
	\gamma=1+{16\over 3}b.
\end{align}
We also remark that $\Phi_{\omega,c}(x)$ is the $H^1$ solution of
$$-\Phi^{\prime \prime}+\bigg(\omega-{c^2\over 4}\bigg)\Phi+{c\over 2}|\Phi|^{2}\Phi-{3\over 16}\gamma|\Phi|^{4}\Phi=0.$$
We define some variational functionals as follow:
\begin{align*}
	K_{\omega,c}(\phi)
	&=\partial_\lambda S_{\omega,c}(\lambda\phi)|_{\lambda=1}
	\\
	&=\|\partial_x \phi\|_{L^2}^2+\omega\|\phi\|_{L^2}^2-c\text{Im}\int\overline{\phi}\partial_x \phi dx+\text{Im}\int |\phi|^{2}\overline{\phi}\partial_x \phi dx-b\|\phi\|_{L^6}^6
	\\
	&=-\|\partial_x \phi\|_{L^2}^2-{b\over 3}\|\phi\|_{L^6}^6+4E(\phi)+\omega M(\phi)+cP(\phi)
\end{align*}
We further define
\begin{align}
	N_1(\phi)&=-\text{Im}\int |\phi|^{2}\overline{\phi}\partial_x \phi dx \label{2N1definition}
	\\
	N_2(\phi)&=b\|\phi\|_{L^6}^6. \label{2N2definition}
\end{align}
We define functions spaces by
\begin{align*}
	f\in X_{\omega,c} \Longleftrightarrow
	\begin{cases}
		f\in H^1(\mathbb{R}) & 4\omega>c^2.
		\\
		e^{-i\frac{cx}{2}}f \in\dot{H}^1(\mathbb{R})\cap L^4(\mathbb{R}) & c=2\sqrt{\omega}. 
	\end{cases}
\end{align*}
Now, we consider the following minimization problem:
\begin{align}\label{2minimizationProb}
	\mu_{\omega,c}=\inf\{S_{\omega,c}(\phi): \phi\in X_{\omega,c}\setminus\{0\}, K_{\omega,c}(\phi)=0\}.
\end{align}

We define the sets $\mathcal{G}_{\omega,c}$ and $\mathcal{M}_{\omega,c}$ by
\begin{align*}
	\mathcal{G}_{\omega,c}
	&:=\{\phi \in X_{\omega,c}\setminus\{0\} : S_{\omega,c}^\prime(\phi)=0\},
	\\
	\mathcal{M}_{\omega,c}
	&:=\{\phi \in X_{\omega,c}\setminus\{0\} : S_{\omega,c}(\phi)=\mu_{\omega,c}, K_{\omega,c}(\phi)=0\}.
\end{align*}
$\mathcal{G}_{\omega,c}$ is the set of nontrivial solution of \eqref{2solitonEquaition}, and $\mathcal{M}_{\omega,c}$ is the set of all minimizers for \eqref{2minimizationProb}.
Now, we state the variational charicterization of $\phi_{\omega,c}$.
\begin{proposition}\label{2propMinimization}
	We have 
	\begin{align*}
		\mathcal{G}_{\omega,c}=
		\mathcal{M}_{\omega,c}=
		\{e^{i\theta}\phi_{\omega,c}(\cdot-y): (\theta, y) \in \mathbb{R}^2\}.
	\end{align*}
\end{proposition}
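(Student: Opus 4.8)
The plan is to prove the two set equalities $\mathcal{G}_{\omega,c}=\mathcal{M}_{\omega,c}=\{e^{i\theta}\phi_{\omega,c}(\cdot-y):(\theta,y)\in\mathbb{R}^2\}$ by establishing a chain of inclusions. First I would show $\mathcal{M}_{\omega,c}\subseteq\mathcal{G}_{\omega,c}$: a minimizer $\phi$ of \eqref{2minimizationProb} satisfies $S_{\omega,c}'(\phi)=\nu K_{\omega,c}'(\phi)$ for a Lagrange multiplier $\nu$. Pairing with $\phi$ and using $K_{\omega,c}(\phi)=\langle S_{\omega,c}'(\phi),\phi\rangle=0$ gives $0=\nu\langle K_{\omega,c}'(\phi),\phi\rangle$, so one must check $\langle K_{\omega,c}'(\phi),\phi\rangle\neq0$; since $K_{\omega,c}=\partial_\lambda S_{\omega,c}(\lambda\phi)|_{\lambda=1}$ and $S_{\omega,c}(\lambda\phi)$ is (essentially) a polynomial in $\lambda$ with a strict interior maximum along the ray through any $\phi$ with $K_{\omega,c}(\phi)=0$, the second derivative $\partial_\lambda^2 S_{\omega,c}(\lambda\phi)|_{\lambda=1}=\langle K_{\omega,c}'(\phi),\phi\rangle$ is strictly negative, forcing $\nu=0$ and hence $S_{\omega,c}'(\phi)=0$. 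Conversely, for $\mathcal{G}_{\omega,c}\subseteq\mathcal{M}_{\omega,c}$ I would show any nontrivial solution $\phi$ of \eqref{2solitonEquaition} satisfies $K_{\omega,c}(\phi)=0$ (immediate, since $K_{\omega,c}(\phi)=\langle S_{\omega,c}'(\phi),\phi\rangle$) and that $S_{\omega,c}(\phi)=\mu_{\omega,c}$; the latter requires knowing that all nontrivial solutions have the same action value, which follows from the explicit classification of solutions to the ODE \eqref{2solitonEquaition} — after the gauge transformation \eqref{2solitonForm} the profile $\Phi_{\omega,c}$ solves a real scalar ODE whose only $H^1$ (resp.\ $\dot H^1\cap L^4$) solutions up to translation are the stated ones, so $\mathcal{G}_{\omega,c}=\{e^{i\theta}\phi_{\omega,c}(\cdot-y)\}$ and in particular $S_{\omega,c}$ is constant on it.

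Next I would establish that this common value equals $\mu_{\omega,c}$, i.e.\ that $\phi_{\omega,c}$ is a minimizer. The standard route: take a minimizing sequence $\{\phi_n\}\subset X_{\omega,c}$ with $K_{\omega,c}(\phi_n)=0$ and $S_{\omega,c}(\phi_n)\to\mu_{\omega,c}$. On the constraint set, $S_{\omega,c}(\phi)=S_{\omega,c}(\phi)-\tfrac14 K_{\omega,c}(\phi)$ which, after using the third displayed expression for $K_{\omega,c}$, becomes a coercive combination of $\|\partial_x\phi\|_{L^2}^2$, $\|\phi\|_{L^2}^2$ (and $\|\phi\|_{L^6}^6$ in the $b=0$-free terms) plus lower-order pieces that can be controlled by Gagliardo–Nirenberg; so the minimizing sequence is bounded in $X_{\omega,c}$. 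One then runs a concentration-compactness / profile decomposition argument (as in \cite{ColinOhta2006}, \cite{Hayashi2021}) to extract, after translation, a weak limit $\phi_\infty\neq0$; lower semicontinuity gives $S_{\omega,c}(\phi_\infty)\leq\mu_{\omega,c}$, while $K_{\omega,c}(\phi_\infty)\leq0$, and if $K_{\omega,c}(\phi_\infty)<0$ one rescales $\lambda\phi_\infty$ with $\lambda<1$ to restore the constraint, strictly decreasing $S_{\omega,c}$ and contradicting minimality — hence $K_{\omega,c}(\phi_\infty)=0$ and $\phi_\infty\in\mathcal{M}_{\omega,c}$. Finally, since any minimizer lies in $\mathcal{G}_{\omega,c}$ by the first step, and $\mathcal{G}_{\omega,c}$ is exactly the orbit $\{e^{i\theta}\phi_{\omega,c}(\cdot-y)\}$ by the ODE classification, all three sets coincide.

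The main obstacle I expect is the compactness step for the degenerate endpoint $c=2\sqrt{\omega}$, where the function space is only $\dot H^1\cap L^4$ rather than $H^1$: there is no $L^2$ control, the profiles decay only algebraically, and the minimizing sequence can lose mass to spatial infinity or to the scaling direction in ways that the $H^1$-subcritical case ($4\omega>c^2$) avoids. Handling this requires the precise structure of $S_{\omega,c}$ restricted to $K_{\omega,c}=0$ — in particular that the momentum and the cubic term $N_1$ conspire to give a genuinely coercive functional on the correct homogeneous space — and a careful dichotomy argument ruling out vanishing and splitting; this is essentially the technical heart borrowed from \cite{KwonWu2018} and \cite{Hayashi2021}. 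A secondary subtlety is verifying the non-degeneracy $\langle K_{\omega,c}'(\phi),\phi\rangle<0$ uniformly, i.e.\ that no minimizer sits at a degenerate critical point of $\lambda\mapsto S_{\omega,c}(\lambda\phi)$, which one checks from the explicit $\lambda$-dependence (quadratic and sextic homogeneity of the various terms) of $S_{\omega,c}(\lambda\phi)$.
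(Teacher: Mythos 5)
Your outline is correct and coincides with the approach the paper itself relies on: the paper does not prove Proposition \ref{2propMinimization} but defers to \cite{ColinOhta2006, KwonWu2018, Guo2018, Hayashi2021}, noting only that a linear profile decomposition is used, which is exactly the Nehari-constraint minimization plus concentration-compactness/profile-decomposition scheme you describe (including the Lagrange-multiplier step via $\partial_\lambda^2 S_{\omega,c}(\lambda\phi)|_{\lambda=1}<0$ on the constraint set and the extra care needed at the endpoint $c=2\sqrt{\omega}$ where $X_{\omega,c}=\dot H^1\cap L^4$).
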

\begin{proposition}\label{prop uniqueness}
	Let $b\geq0$. Suppose that a sequence $\{u_n\}\subset H^1$ satisfies
	\begin{align}\label{prop uniqueness converge asumption}
		S_{\omega,c}(u_n)\to \mu_{\omega,c}, \quad K_{\omega,c}(u_n)\to 0.
	\end{align}
	For $(\omega, c)\in \mathbb{R}^2$ satisfing $-2\sqrt{\omega}<c\leq2\sqrt{\omega}$, there exist sequences $\{\theta_n\}, \{y_n\}\subset \mathbb{R}$ such that $e^{i\theta_n}u_n(\cdot-y_n)$ converges to $\phi_{\omega,c}$ strongly in $X_{\omega,c}$ up to subsequence.
\end{proposition}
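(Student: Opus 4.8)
The plan is to prove Proposition \ref{prop uniqueness} by a concentration--compactness analysis of the constrained minimization \eqref{2minimizationProb}, in the spirit of \cite{ColinOhta2006,Hayashi2021,KwonWu2018}. The computational backbone is the identity
\begin{align*}
	J_{\omega,c}(\phi):=S_{\omega,c}(\phi)-\tfrac14 K_{\omega,c}(\phi)=\tfrac14\left(\|\partial_x\phi\|_{L^2}^2+\omega\|\phi\|_{L^2}^2+cP(\phi)\right)+\tfrac1{12}N_2(\phi),
\end{align*}
combined with the gauge computation $\|\partial_x\phi\|_{L^2}^2+\omega\|\phi\|_{L^2}^2+cP(\phi)=\|\partial_x(e^{-icx/2}\phi)\|_{L^2}^2+\left(\omega-\tfrac{c^2}{4}\right)\|\phi\|_{L^2}^2$: this shows that the quadratic part of $J_{\omega,c}$ is equivalent to $\|\phi\|_{H^1}^2$ when $4\omega>c^2$ and equals $\|\partial_x(e^{-i\sqrt\omega x}\phi)\|_{L^2}^2$ when $c=2\sqrt\omega$, so that $J_{\omega,c}\geq0$ with $J_{\omega,c}(\phi)=0$ only for $\phi=0$. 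Using the Gagliardo--Nirenberg bounds $|N_1(\phi)|\lesssim\|\phi\|_{L^6}^3\|\partial_x\phi\|_{L^2}$ and, in the endpoint case, $\|\psi\|_{L^6}\lesssim\|\psi\|_{L^4}^{8/9}\|\partial_x\psi\|_{L^2}^{1/9}$ for $\psi\in\dot H^1\cap L^4$, one checks that the constraint $K_{\omega,c}(\phi)=0$ keeps $\phi$ bounded away from $0$ in $X_{\omega,c}$, hence $\mu_{\omega,c}>0$. Finally, since $\lambda\mapsto J_{\omega,c}(\lambda\phi)=\tfrac{\lambda^2}{4}(\cdots)+\tfrac{\lambda^6}{12}N_2(\phi)$ is strictly increasing for $\phi\neq0$, and any $\phi\neq0$ with $K_{\omega,c}(\phi)<0$ admits $\lambda^\ast\in(0,1)$ with $K_{\omega,c}(\lambda^\ast\phi)=0$, one upgrades \eqref{2minimizationProb} to the relaxed problem
\begin{align}\label{Jmin}
	\mu_{\omega,c}=\inf\left\{J_{\omega,c}(\phi):\phi\in X_{\omega,c}\setminus\{0\},\ K_{\omega,c}(\phi)\leq0\right\}.
\end{align}

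Next, take $\{u_n\}$ as in \eqref{prop uniqueness converge asumption}, so that $J_{\omega,c}(u_n)=S_{\omega,c}(u_n)-\tfrac14K_{\omega,c}(u_n)\to\mu_{\omega,c}$. The identity above together with the Gagliardo--Nirenberg bounds (and, when $c=2\sqrt\omega$, the fact that $N_1$ is controlled by $\|\,\cdot\,\|_{L^4}$ to a power less than $4$) gives that $\{u_n\}$ is bounded in $X_{\omega,c}$. Lions vanishing is impossible: if $\sup_{y\in\mathbb R}\int_y^{y+1}|u_n|^2\,dx\to0$ (or the analogous $L^4$ statement in the endpoint case), then $N_1(u_n),N_2(u_n)\to0$, whence $K_{\omega,c}(u_n)\to0$ forces the quadratic part of $J_{\omega,c}(u_n)$ to $0$ (in the endpoint case using that $K_{\omega,c}$, read in the gauge variable, bounds $\|\partial_x(e^{-i\sqrt\omega x}u_n)\|_{L^2}^2$ from above), so $S_{\omega,c}(u_n)\to0$, contradicting $\mu_{\omega,c}>0$. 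Hence there are $y_n\in\mathbb R$ with $u_n(\cdot-y_n)\rightharpoonup v$ weakly in $X_{\omega,c}$ along a subsequence and $v\neq0$, where for $c=2\sqrt\omega$ the non-triviality of the local weak limit comes from the uniform $C^{1/2}$ bound that $\dot H^1(\mathbb R)$ provides.

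Relabel $u_n(\cdot-y_n)$ as $u_n$. The heart of the matter is a Brezis--Lieb type splitting: the quadratic form above, $N_1=-\Im\int|\phi|^2\overline{\phi}\,\partial_x\phi$, and $N_2$ all decouple as $F(u_n)=F(v)+F(u_n-v)+o(1)$ (for $N_1$ this is the standard trilinear splitting lemma used in \cite{ColinOhta2006,Hayashi2021}), and therefore so do $J_{\omega,c}$, $S_{\omega,c}$ and $K_{\omega,c}$. From $J_{\omega,c}(u_n)\to\mu_{\omega,c}$ we get $\mu_{\omega,c}=J_{\omega,c}(v)+\lim_n J_{\omega,c}(u_n-v)$. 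If $K_{\omega,c}(v)>0$, then $K_{\omega,c}(u_n-v)\to-K_{\omega,c}(v)<0$, so for large $n$ we have $u_n-v\neq0$ with $K_{\omega,c}(u_n-v)<0$, whence $J_{\omega,c}(u_n-v)\geq\mu_{\omega,c}$ by \eqref{Jmin}; this forces $J_{\omega,c}(v)=0$, i.e.\ $v=0$, a contradiction. So $K_{\omega,c}(v)\leq0$; by \eqref{Jmin} $J_{\omega,c}(v)\geq\mu_{\omega,c}$, hence $J_{\omega,c}(v)=\mu_{\omega,c}$ and $J_{\omega,c}(u_n-v)\to0$. If moreover $K_{\omega,c}(v)<0$, shrinking $v$ to $\lambda^\ast v$ with $K_{\omega,c}(\lambda^\ast v)=0$ gives $J_{\omega,c}(\lambda^\ast v)<\mu_{\omega,c}$, again contradicting \eqref{Jmin}; hence $K_{\omega,c}(v)=0$ and $v\in\mathcal{M}_{\omega,c}$. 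It remains to promote weak to strong convergence: $J_{\omega,c}(u_n-v)\to0$ already yields $\|u_n-v\|_{H^1}\to0$ when $4\omega>c^2$; when $c=2\sqrt\omega$ it yields $\|\partial_x(e^{-i\sqrt\omega x}(u_n-v))\|_{L^2}\to0$ and $N_2(u_n-v)\to0$, and then rewriting $K_{\omega,c}(u_n-v)\to-K_{\omega,c}(v)=0$ in the gauge variable (where it contains the coercive term $\sqrt\omega\|e^{-i\sqrt\omega x}(u_n-v)\|_{L^4}^4$, the remaining terms being $o(1)$ by Gagliardo--Nirenberg) forces $\|e^{-i\sqrt\omega x}(u_n-v)\|_{L^4}\to0$, i.e.\ $u_n\to v$ in $X_{\omega,c}$. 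Finally Proposition \ref{2propMinimization} identifies $v=e^{i\theta_0}\phi_{\omega,c}(\cdot-y_0)$, and absorbing $(\theta_0,y_0)$ into the modulation parameters $\{\theta_n\},\{y_n\}$ completes the proof.

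The step I expect to be genuinely delicate is the endpoint case $c=2\sqrt\omega$ (the one relevant to the algebraic/degenerate soliton when $b=0$): there $X_{\omega,c}$ provides no $L^2$ control, the quadratic part of $J_{\omega,c}$ only sees the gauged homogeneous $\dot H^1$ seminorm, and consequently the exclusion of vanishing, the non-triviality of the weak limit, and the final upgrade to strong convergence must all be carried out through the gauge variable $e^{-i\sqrt\omega x}\phi\in\dot H^1\cap L^4$, leaning on scaling-critical Gagliardo--Nirenberg inequalities on that space and on the coercive quartic term $\sqrt\omega\|e^{-i\sqrt\omega x}\phi\|_{L^4}^4$ hidden inside $K_{\omega,c}$. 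Establishing the Brezis--Lieb splitting for $N_1$ in this endpoint space is the other technical point, though it is by now standard.
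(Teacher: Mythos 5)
The paper does not actually prove Proposition \ref{prop uniqueness}: it cites \cite{ColinOhta2006,KwonWu2018,Guo2018,Hayashi2021} and remarks only that the proof ``uses the linear profile decomposition.'' Your argument is a correct and self-contained alternative along classical Lions concentration--compactness plus Brezis--Lieb lines, organized around the functional $J_{\omega,c}=S_{\omega,c}-\tfrac14K_{\omega,c}$; I checked the key identity $J_{\omega,c}(\phi)=\tfrac14\bigl(\|\partial_x\phi\|_{L^2}^2+\omega M(\phi)+cP(\phi)\bigr)+\tfrac1{12}N_2(\phi)$ (the $N_1$ term indeed cancels), the gauge identity, the appearance of the coercive term $\tfrac{c}{2}\|e^{-icx/2}\phi\|_{L^4}^4$ inside $K_{\omega,c}$, and the relaxation to $K_{\omega,c}\le 0$; all are right, and the strict positivity $J_{\omega,c}>0$ off the origin together with the relaxed problem makes a single profile extraction suffice, so you do not need the full profile-decomposition machinery (in particular, any $\dot H^1$ bubble concentrating at a shrinking scale in the endpoint case would carry positive $J$ but is automatically excluded once you derive $J_{\omega,c}(u_n-v)\to0$). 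Two small points to tighten. First, boundedness of minimizing sequences in $L^4$ at the endpoint $c=2\sqrt\omega$: your remark that $N_1$ is subcritical in $\|\cdot\|_{L^4}$ (power $8/3<4$) settles $b=0$, but for $b>0$ the term $N_2\lesssim\|\psi\|_{L^4}^{16/3}\|\partial_x\psi\|_{L^2}^{2/3}$ is supercritical and you should instead read the $L^6$ bound directly off $J_{\omega,c}(u_n)\to\mu_{\omega,c}$ (which contains $\tfrac1{12}N_2$) before returning to $K_{\omega,c}$ for the $L^4$ bound. Second, for $\mu_{\omega,c}>0$ in the endpoint case it is cleanest to simply quote Proposition \ref{2propMinimization}, which gives $\mu_{\omega,c}=S_{\omega,c}(\phi_{\omega,c})=J_{\omega,c}(\phi_{\omega,c})>0$, rather than arguing from the constraint alone, where the $b>0$ bookkeeping gets delicate. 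With these adjustments the proof is complete; the trade-off versus the paper's cited route is that the profile decomposition handles possible multi-bubble splitting systematically, whereas your approach is more elementary but leans on the exact cancellation of $N_1$ in $J_{\omega,c}$.
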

We can find the proofs of results similar to the Proposition \ref{2propMinimization} and Proposition \ref{prop uniqueness} in many other papers, such as \cite{ColinOhta2006, KwonWu2018,Guo2018, Hayashi2021}. We use the linear profile decomposition. We remark that Proposition \ref{2propMinimization} implies that $S_{\omega,c}(\phi_{\omega,c})=\mu_{\omega,c}$. We also introduce the uniqueness of degenerate solitary waves.
\begin{proposition}\label{2energyequMomentum}
	Suppose $b\geq 0$ and $-2\sqrt{\omega}<c\leq2\sqrt{\omega}$. Then, we have
	\begin{align*}
		E(\phi_{\omega,c})=-\frac{c}{4}P(\phi_{\omega,c}).
	\end{align*}
	Moreover, there exists a unique $\kappa_0=\kappa_0(b)\in (0,1]$ with $\kappa_0(0)=1$ such that
	\begin{align*}
		E(\phi_{\omega,2\kappa_0\sqrt{\omega}})=P(\phi_{\omega,2\kappa_0\sqrt{\omega}})=0.
	\end{align*}
\end{proposition}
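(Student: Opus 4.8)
The plan is to establish the two assertions in turn, both by reducing to explicit computations with the profile $\Phi=\Phi_{\omega,c}$ from \eqref{2solitonForm} and with the real ODE it satisfies; the first assertion is a Pohozaev identity, the second an intermediate‑value/monotonicity statement for a function of one variable.

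For the identity $E(\phi_{\omega,c})=-\tfrac c4P(\phi_{\omega,c})$, I would write $\phi_{\omega,c}=\Phi e^{i\theta}$ with $\theta'=\tfrac c2-\tfrac14\Phi^2$, so that $|\partial_x\phi_{\omega,c}|^2=(\Phi')^2+\Phi^2(\theta')^2$ and $\Im\big(|\phi_{\omega,c}|^2\overline{\phi_{\omega,c}}\,\partial_x\phi_{\omega,c}\big)=\Phi^4\theta'$, and substitute into the formulas for $E$ and $P$ (noting $(i|\phi|^2\partial_x\phi,\phi)_{L^2}=N_1(\phi)$ from \eqref{2N1definition}). Expanding $\theta'$ yields
\[
  P(\phi_{\omega,c})=-\tfrac c2\!\int\!\Phi^2+\tfrac14\!\int\!\Phi^4,
  \qquad
  E(\phi_{\omega,c})=\tfrac12\!\int\!(\Phi')^2+\tfrac{c^2}8\!\int\!\Phi^2-\Big(\tfrac1{32}+\tfrac b6\Big)\!\int\!\Phi^6,
\]
hence $E(\phi_{\omega,c})+\tfrac c4P(\phi_{\omega,c})=\tfrac12\int(\Phi')^2+\tfrac c{16}\int\Phi^4-(\tfrac1{32}+\tfrac b6)\int\Phi^6$, and it remains to show this vanishes. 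To that end, use that $\Phi$ solves $-\Phi''+(\omega-\tfrac{c^2}4)\Phi+\tfrac c2\Phi^3-\tfrac3{16}\gamma\Phi^5=0$: multiplying by $\Phi$ and integrating, and separately multiplying by $x\Phi'$ and integrating — the boundary terms all vanish because $\Phi\lesssim\langle x\rangle^{-1}$ and $\Phi'\lesssim\langle x\rangle^{-2}$ for every $-2\sqrt\omega<c\le2\sqrt\omega$, including the algebraic soliton — gives two linear relations among $\int(\Phi')^2,\int\Phi^2,\int\Phi^4,\int\Phi^6$. Adding the first to twice the second eliminates $\int\Phi^2$ and reduces (after dividing by $4$) to $\tfrac12\int(\Phi')^2+\tfrac c{16}\int\Phi^4=\tfrac\gamma{32}\int\Phi^6$, and $\tfrac\gamma{32}=\tfrac1{32}+\tfrac b6$ precisely because $\gamma=1+\tfrac{16}3b$ by \eqref{3gamma}. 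This proves the identity.

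For the second assertion, observe that by the identity just proved any soliton with $c>0$ and $P(\phi_{\omega,c})=0$ also has $E(\phi_{\omega,c})=0$, so it suffices to study the zero set of $c\mapsto P(\phi_{\omega,c})$ on $(0,2\sqrt\omega]$. From \eqref{2solitonEquaition} one reads off the scaling $\phi_{\omega,c}(x)=\omega^{1/4}\phi_{1,c/\sqrt\omega}(\sqrt\omega\,x)$, whence $P(\phi_{\omega,c})=\sqrt\omega\,P(\phi_{1,c/\sqrt\omega})$; this reduces matters to $g(c):=P(\phi_{1,c})$ on $(-2,2]$ and shows the eventual $\kappa_0$ to be $\omega$‑independent. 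Substituting the explicit $\Phi_{1,c}$ into $g(c)=-\tfrac c2\int\Phi_{1,c}^2+\tfrac14\int\Phi_{1,c}^4$ and evaluating the elementary integrals (e.g.\ via $v=e^{\sqrt{4-c^2}\,x}$) gives the closed form
\[
  g(c)=\tfrac2{\gamma^{3/2}}\Big(\sqrt\gamma\,\sqrt{4-c^2}-(\gamma-1)\,c\,\Theta(c)\Big),
  \qquad
  \Theta(c)=\tfrac\pi2+\arctan\!\frac{c}{\sqrt\gamma\sqrt{4-c^2}}\in(0,\pi),
\]
for $c\in(-2,2)$, the value at $c=2$ being the limit (equal to $0$ when $b=0$ and negative when $b>0$, and one cross‑checks it against the direct computation from $\Phi_{1,2}^2=8/(4x^2+\gamma)$). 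If $b=0$ then $\gamma=1$ and $g(c)=2\sqrt{4-c^2}>0$ on $(-2,2)$ with $g(2)=0$, so the unique zero in $(0,2]$ is $c=2$ and $\kappa_0(0)=1$. If $b>0$ then $\gamma>1$: here $g(c)>0$ for $c\le0$, while $g(0)=4/\gamma>0>g(2)$, and since $\sqrt{4-c^2}$ is strictly decreasing and $\Theta$ strictly increasing on $(0,2)$ the bracket $\sqrt\gamma\sqrt{4-c^2}-(\gamma-1)c\,\Theta(c)$ is strictly decreasing there, so $g$ has exactly one zero $c^{\ast}\in(0,2)$. Setting $\kappa_0:=c^{\ast}/2\in(0,1)$ (resp.\ $\kappa_0=1$ for $b=0$) gives the unique $\kappa_0=\kappa_0(b)\in(0,1]$ with $P(\phi_{\omega,2\kappa_0\sqrt\omega})=0$, and then $E(\phi_{\omega,2\kappa_0\sqrt\omega})=-\tfrac{\kappa_0\sqrt\omega}2\,P(\phi_{\omega,2\kappa_0\sqrt\omega})=0$.

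The argument is conceptually light, and the two places needing care are: (i) checking that the boundary terms in the Pohozaev integrations by parts really vanish in the degenerate case $c=2\sqrt\omega$, where $\Phi$ decays only polynomially — this works because $x(\Phi')^2$, $x\Phi^2$, $x\Phi^4$, $x\Phi^6\to0$; and (ii) tracking the numerical constants so that the Pohozaev combination lands exactly on $E+\tfrac c4P$. I expect (ii) — the bookkeeping around $\gamma=1+\tfrac{16}3b$ — to be the most error‑prone step, so I would verify it against the $b=0$ case, where the algebraic soliton has $E=P=0$, and against the signs of $g$ at $c=0$ and $c=2$.
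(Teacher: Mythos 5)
Your proposal is correct, and I verified the two computations it hinges on: the Pohozaev combination (multiply the real ODE for $\Phi$ by $\Phi$ and by $x\Phi'$, form $A+2B$, divide by $4$) does land exactly on $E+\tfrac{c}{4}P$ because $\tfrac{\gamma}{32}=\tfrac{1}{32}+\tfrac{b}{6}$, and your closed form for $g(c)=P(\phi_{1,c})$ is right — one gets $\int\Phi_{1,c}^2=\tfrac{4}{\sqrt\gamma}\Theta(c)$ and $\int\Phi_{1,c}^4=\tfrac{8}{\gamma}\sqrt{4-c^2}+\tfrac{8c}{\gamma^{3/2}}\Theta(c)$, which reproduces your bracket, agrees with the direct evaluation $g(2)=-4\pi(\gamma-1)\gamma^{-3/2}$ from the algebraic profile $\Phi_{1,2}^2=8/(4x^2+\gamma)$, and gives $g(0)=4/\gamma>0$; the monotonicity of the bracket on $(0,2)$ and the boundary-term checks in the degenerate case ($\Phi\sim|x|^{-1}$, $\Phi'\sim|x|^{-2}$) are all as you say. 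The comparison with the paper is short: the paper does not prove this proposition at all, but simply cites Ohta (2014) and Hayashi (2021). Your argument is essentially a self-contained reconstruction of the standard proof in those references — the identity $E=-\tfrac{c}{4}P$ via virial/Pohozaev relations, and the existence and uniqueness of $\kappa_0$ via the explicit sign change and monotonicity of $c\mapsto P(\phi_{1,c})$ after reducing to $\omega=1$ by scaling — so nothing is gained or lost relative to the cited proofs, but it does make the paper's exposition self-contained.
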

\begin{proof}
	See \cite{Ohta2014} or \cite{Hayashi2021}.
\end{proof}
Thanks to Proposition \ref{2energyequMomentum}, the zero energy soliton always has zero momentum. In addition, we have
\begin{align*}
	\phi_{\omega, 2\kappa_0\sqrt{\omega}}=(\phi_{1, 2\kappa_0})_{\sqrt{\omega}},
\end{align*}
and this means that the degenerate soliton is unique up to scaling and $M(\phi_{\omega, 2\kappa_0\sqrt{\omega}})=M(\phi_{1, 2\kappa_0})$ for any $\omega>0$. Finally, in this section, we introduce the theorem that motivated our main result.
\begin{theorem}\label{theorem minimum}
	If $E(\phi)\leq 0, P(\phi)\leq0$, and $M(\phi)=M(\phi_{1, 2\kappa_0})$, then $\phi=e^{i\theta}\phi_{\omega, 2\kappa_0\sqrt{\omega}}(\cdot-y)$ for some $\theta, y \in \mathbb{R}$, and $\omega>0$.
\end{theorem}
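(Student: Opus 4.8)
The plan is to realize a suitable rescaling $\phi_{\lambda_*}$ of $\phi$ as a minimizer for the constrained problem \eqref{2minimizationProb} at the degenerate pair $(\omega,c)=(1,2\kappa_0)$, and then read off its shape from Proposition \ref{2propMinimization}. Since $M(\phi)=M(\phi_{1,2\kappa_0})>0$, necessarily $\phi\in H^1(\mathbb{R})\setminus\{0\}$.

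First I would record the elementary scaling identities for the operator $f_\lambda(x)=\lambda^{1/2}f(\lambda x)$: a change of variables gives $M(\phi_\lambda)=M(\phi)$, $P(\phi_\lambda)=\lambda P(\phi)$, $E(\phi_\lambda)=\lambda^2 E(\phi)$, together with $\|\partial_x\phi_\lambda\|_{L^2}^2=\lambda^2\|\partial_x\phi\|_{L^2}^2$, $\|\phi_\lambda\|_{L^6}^6=\lambda^2\|\phi\|_{L^6}^6$, and $N_1(\phi_\lambda)=\lambda^2 N_1(\phi)$. Setting $c=2\kappa_0$, these give
\[
S_{1,c}(\phi_\lambda)=\lambda^2 E(\phi)+\tfrac12 M(\phi)+\tfrac{c}{2}\lambda P(\phi),
\qquad
K_{1,c}(\phi_\lambda)=-B\lambda^2+c\,P(\phi)\,\lambda+M(\phi),
\]
with $B:=N_1(\phi)+b\|\phi\|_{L^6}^6-\|\partial_x\phi\|_{L^2}^2$. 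From $E(\phi)\le 0$ one has $N_1(\phi)\ge 2\|\partial_x\phi\|_{L^2}^2-\tfrac{2b}{3}\|\phi\|_{L^6}^6$, hence $B\ge\|\partial_x\phi\|_{L^2}^2+\tfrac{b}{3}\|\phi\|_{L^6}^6>0$ (a function in $H^1(\mathbb{R})$ with vanishing derivative is zero). Therefore $\lambda\mapsto K_{1,c}(\phi_\lambda)$ is a strictly concave quadratic that is positive at $\lambda=0$, so it has a unique positive root $\lambda_*$.

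Next I would test \eqref{2minimizationProb} against $\phi_{\lambda_*}$. On the one hand $K_{1,c}(\phi_{\lambda_*})=0$ and $\phi_{\lambda_*}\in H^1(\mathbb{R})\setminus\{0\}\subset X_{1,2\kappa_0}\setminus\{0\}$, so $\phi_{\lambda_*}$ is admissible and $S_{1,c}(\phi_{\lambda_*})\ge\mu_{1,c}$. On the other hand, $\mu_{1,c}=S_{1,c}(\phi_{1,2\kappa_0})$ by Proposition \ref{2propMinimization}, and since $E(\phi_{1,2\kappa_0})=P(\phi_{1,2\kappa_0})=0$ by Proposition \ref{2energyequMomentum} this equals $\tfrac12 M(\phi_{1,2\kappa_0})$; meanwhile the hypotheses $E(\phi)\le 0$, $P(\phi)\le 0$ together with $c,\lambda_*>0$ give $S_{1,c}(\phi_{\lambda_*})\le\tfrac12 M(\phi)=\tfrac12 M(\phi_{1,2\kappa_0})$. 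Hence $S_{1,c}(\phi_{\lambda_*})=\mu_{1,c}$, that is, $\phi_{\lambda_*}\in\mathcal{M}_{1,2\kappa_0}$, and Proposition \ref{2propMinimization} yields $\phi_{\lambda_*}=e^{i\theta_0}\phi_{1,2\kappa_0}(\cdot-y_0)$ for some $\theta_0,y_0\in\mathbb{R}$. Undoing the scaling and using $\phi_{\omega,2\kappa_0\sqrt{\omega}}=(\phi_{1,2\kappa_0})_{\sqrt{\omega}}$ then gives $\phi=e^{i\theta_0}\phi_{\lambda_*^{-2},\,2\kappa_0\lambda_*^{-1}}(\cdot-\lambda_* y_0)$, which is the claimed form with $\omega=\lambda_*^{-2}$.

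The proof is short once Propositions \ref{2propMinimization} and \ref{2energyequMomentum} are in hand; what needs care is the sign bookkeeping. The choice $c=2\kappa_0$ is precisely what forces $\mu_{1,c}=\tfrac12 M(\phi_{1,2\kappa_0})$ while keeping $S_{1,c}(\phi_\lambda)\le\tfrac12 M(\phi)$ for all $\lambda>0$ under the hypotheses on $E$ and $P$, and one must verify $B>0$ so that the admissible rescaling parameter $\lambda_*$ exists. I would also check directly that $H^1(\mathbb{R})\subset X_{1,2\kappa_0}$ in the endpoint case $b=0$, $\kappa_0=1$, $c=2$, where $X_{1,2}$ is genuinely larger than $H^1(\mathbb{R})$.
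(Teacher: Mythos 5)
Your argument is correct, and it is essentially the standard variational proof: the paper itself does not prove this statement but defers to \cite{Hayashi2021}, Theorem 1.7-(vi), whose argument (echoed in the proof of Proposition \ref{theroem decompose} here) likewise rescales along the degenerate family to land on the Nehari constraint $K_{1,2\kappa_0}=0$ and then uses $E(\phi)\le 0$, $P(\phi)\le 0$ to force $S_{1,2\kappa_0}(\phi_{\lambda_*})=\mu_{1,2\kappa_0}=\tfrac12 M(\phi_{1,2\kappa_0})$, after which Proposition \ref{2propMinimization} identifies the minimizer. The only cosmetic difference is that you fix $(\omega,c)=(1,2\kappa_0)$ and vary the scaling parameter $\lambda$, whereas the cited argument fixes the scale and varies $\omega$ along $c=2\kappa_0\sqrt{\omega}$; these are interchangeable since $\phi_{\omega,2\kappa_0\sqrt{\omega}}=(\phi_{1,2\kappa_0})_{\sqrt{\omega}}$.
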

\begin{proof}
	See (\cite{Hayashi2021}, Theorem 1.7-(vi)).
\end{proof}
From above theorem, we can expect that if $M(\phi)$ is slightly larger then the critical $M(\phi_{1, 2\kappa_0})$ with negative energy and momentum, then $\phi$ is close to $\phi_{1, 2\kappa_0}$ in a sense.

\section{Proof of Theorem \ref{MainTheorem}}
We prove our main theorem in this section. Since $M(u), E(u), P(u)$ are conserved, it sufficies to show the following theorem to obtain Theorem \ref{MainTheorem}. We will combine the variational characterization in \cite{KwonWu2018} and the argument used in (\cite{Hayashi2021}, Theorem 1.7-(vi)). 
\begin{proposition}\label{theroem decompose}
	Let $b\geq 0$. For any $\epsilon>0$, there exists a $\delta=\delta(\epsilon)$ such that if $M^*=M(\phi_{1, 2\kappa_0})<M(u)<M^*+\delta$, $E(u)\leq 0$, and $P(u)\leq 0$, then we have
	\begin{align}
		\inf_{(\theta, y)\in\mathbb{R}^2}\left\|\phi_{\omega, 2\kappa_0\sqrt{\omega}}-e^{i\theta}u_{\lambda_0}(\cdot -y)\right\|_{H^1}<\epsilon
	\end{align}
	where $\lambda_{0}=\lambda_{0}(\omega, u)$ is given by
	\begin{align}\label{theorem uniquness lambda}
		\lambda_{0}^2={\omega\|\phi_{1, 2\kappa_0}\|_{L^2}^2\over \|\partial_x u\|_{L^2}^2+{b\over 3}\|u\|_{L^6}^6}
		={\|\partial_x \phi_{\omega, 2\kappa_0\sqrt{\omega}}\|_{L^2}^2+{b\over 3}\|\phi_{\omega, 2\kappa_0\sqrt{\omega}}\|_{L^6}^6\over \|\partial_x u\|_{L^2}^2+{b\over 3}\|u\|_{L^6}^6}.
	\end{align}
\end{proposition}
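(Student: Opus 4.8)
The plan is to argue by contradiction and compactness, mirroring the structure of Theorem~\ref{theorem minimum} but now at the level of \emph{near}-minimizers rather than exact minimizers. Suppose the conclusion fails: there is $\epsilon_0>0$ and a sequence $u_n$ with $M^*<M(u_n)<M^*+1/n$, $E(u_n)\le 0$, $P(u_n)\le 0$, yet $\inf_{(\theta,y)}\|\phi_{\omega_n,2\kappa_0\sqrt{\omega_n}}-e^{i\theta}(u_n)_{\lambda_{0,n}}(\cdot-y)\|_{H^1}\ge\epsilon_0$, where $\omega_n$ and $\lambda_{0,n}=\lambda_0(\omega_n,u_n)$ are determined through \eqref{theorem uniquness lambda}. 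The natural move is to rescale: set $v_n:=(u_n)_{\lambda_{0,n}^{-1}}$ so that, by the choice of $\lambda_{0,n}$, the quantity $\|\partial_x v_n\|_{L^2}^2+\tfrac b3\|v_n\|_{L^6}^6$ is pinned to a fixed value, namely $\|\partial_x\phi_{1,2\kappa_0}\|_{L^2}^2+\tfrac b3\|\phi_{1,2\kappa_0}\|_{L^6}^6$ (here I use $\phi_{\omega,2\kappa_0\sqrt\omega}=(\phi_{1,2\kappa_0})_{\sqrt\omega}$ and the scaling of each term). The scaling is chosen precisely so that $\lambda_{0,n}^{-1}$ absorbs the $\dot H^1$-plus-sextic energy; the remaining freedom is to pick $\omega$, which I will fix to normalize things to the $\omega=1$ soliton.

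The second step is to show $v_n$ (or a suitable further rescaling incorporating $\omega_n$) is a minimizing sequence for the problem \eqref{2minimizationProb} with $(\omega,c)=(1,2\kappa_0)$, i.e. $S_{1,2\kappa_0}(v_n)\to\mu_{1,2\kappa_0}$ and $K_{1,2\kappa_0}(v_n)\to 0$. This is where the hypotheses $E\le0$, $P\le0$ and the slightly-supercritical mass must be converted into analytic statements. The mechanism is the one behind Theorem~\ref{theorem minimum}: under the mass constraint $M=M^*$, non-positivity of $E$ and $P$ forces $\phi$ onto the minimizer; when $M$ is only slightly above $M^*$, one gets quantitative closeness of the relevant functionals. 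Concretely, I expect to use the scaling identity
\begin{align*}
	K_{\omega,c}(\phi)=-\|\partial_x\phi\|_{L^2}^2-\tfrac b3\|\phi\|_{L^6}^6+4E(\phi)+\omega M(\phi)+cP(\phi)
\end{align*}
together with $E(u_n)\le0$, $P(u_n)\le0$ to sandwich $K$ from above, and the Gagliardo--Nirenberg / variational lower bounds (as used in \cite{KwonWu2018,Hayashi2021}) to sandwich it from below, so that $K_{\omega_n,c_n}$ of the rescaled functions tends to $0$; simultaneously $S$ converges to $\mu$ because the mass excess $M(u_n)-M^*\to0$ kills the only term that is not already controlled. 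Choosing $\omega_n$ so that the rescaled profile has the right $L^2$ norm (or equivalently the right $\omega$) is what makes both $K\to0$ and $S\to\mu$ hold for the same parameter.

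The third step is to invoke Proposition~\ref{prop uniqueness}: a minimizing sequence for \eqref{2minimizationProb} with $-2\sqrt\omega<c\le2\sqrt\omega$ converges, after phase and translation, strongly in $X_{\omega,c}$ (hence in $H^1$ for the subcritical profile, and we land on the degenerate soliton $\phi_{1,2\kappa_0}$, which is the $c=2\sqrt\omega$ endpoint but has exponential decay so lies in $H^1$). Undoing the rescalings $v_n\mapsto u_n$, which by construction is exactly the scaling built into $\lambda_{0,n}$ in \eqref{theorem uniquness lambda}, converts this into $\inf_{(\theta,y)}\|\phi_{\omega_n,2\kappa_0\sqrt{\omega_n}}-e^{i\theta}(u_n)_{\lambda_{0,n}}(\cdot-y)\|_{H^1}\to0$, contradicting the assumed lower bound $\epsilon_0$. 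One must check that the second formula for $\lambda_{0,n}$ in \eqref{theorem uniquness lambda} is consistent with the first, which is immediate from $\phi_{\omega,2\kappa_0\sqrt\omega}=(\phi_{1,2\kappa_0})_{\sqrt\omega}$ and the scaling $\|\partial_x f_\lambda\|_{L^2}^2=\lambda^2\|\partial_x f\|_{L^2}^2$, $\|f_\lambda\|_{L^6}^6=\lambda^2\|f\|_{L^6}^6$.

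\textbf{The main obstacle} I anticipate is the second step: verifying that the rescaled sequence is genuinely a minimizing sequence for \eqref{2minimizationProb}, and in particular correctly choosing the parameter $\omega_n$ (equivalently, the $L^2$-scaling) so that $K\to0$ and $S\to\mu$ hold \emph{with respect to the same $(\omega_n,c_n)$}. The subtlety is that we have three scaling-type symmetries but the soliton family has a genuinely two-parameter structure, and the degenerate locus $c=2\kappa_0\sqrt\omega$ is one-dimensional; pinning $\|\partial_x u\|^2+\tfrac b3\|u\|_{L^6}^6$ via $\lambda_{0}$ uses up one degree of freedom, fixing $c=2\kappa_0\sqrt\omega$ uses the degeneracy, and the remaining choice of $\omega$ must be made so that the mass matches up to the $o(1)$ excess — this bookkeeping, together with the endpoint nature of the variational problem ($c=2\sqrt\omega$, where $X_{\omega,c}$ is not $H^1$), is where the argument of \cite{KwonWu2018} and \cite{Hayashi2021} has to be combined carefully, and is the step most likely to hide a technical difficulty.
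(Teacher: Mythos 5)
Your overall architecture matches the paper's: rescale by $\lambda_0$ so that $\lambda_0^2\bigl(\|\partial_x u\|_{L^2}^2+\tfrac b3\|u\|_{L^6}^6\bigr)=\omega M^*$, show that the rescaled function is a near-minimizer of \eqref{2minimizationProb} in the sense $K_\omega(u_{\lambda_0})\to0$, $S_\omega(u_{\lambda_0})\to\mu_\omega$, and then invoke Proposition \ref{prop uniqueness}. But the step you defer to ``Gagliardo--Nirenberg / variational lower bounds'' is exactly the content of the proof, and as stated it does not go through. After the $\lambda_0$-normalization one has $K_\omega(u_{\lambda_0})=4\lambda_0^2E(u)+\omega(M(u)-M^*)+2\kappa_0\sqrt\omega\,\lambda_0P(u)$ and $2S_\omega(u_{\lambda_0})=2\lambda_0^2E(u)+\omega M(u)+2\kappa_0\sqrt\omega\,\lambda_0P(u)$; the hypotheses $E\le0$, $P\le0$ give the \emph{upper} bounds for free, and the mass excess controls only the $\omega(M(u)-M^*)$ term. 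What must be proved is that $\lambda_0^2E(u)$ and $\lambda_0P(u)$ cannot be bounded away from $0$ from below. Gagliardo--Nirenberg cannot give this for the momentum: Cauchy--Schwarz only yields $\lambda_0|P(u)|\lesssim\sqrt{M(u)}$, which is $O(1)$, not $o(1)$, so no ``sandwich'' closes.

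In the paper these two lower bounds are two separate contradiction arguments with different mechanisms, neither of which appears in your plan. For $-\epsilon<\lambda_0^2E(u)$, one multiplies $u_n$ by a constant $\lambda_n\uparrow1$ to force $M(\lambda_nu_n)=M^*$, applies the rigidity Theorem \ref{theorem minimum} to conclude $E(\lambda_nu_n)\ge0$, and then expands $E(\lambda_nu_n)/\|\partial_x(\lambda_nu_n)\|_{L^2}^2$ in terms of $N_1,N_2$ to contradict $E(u_n)/\|\partial_xu_n\|_{L^2}^2\to-c<0$ (Gagliardo--Nirenberg enters only here, to bound $N_1(u_n)/\|\partial_xu_n\|_{L^2}^2$). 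For $-\epsilon<\lambda_0P(u)$, the argument is genuinely variational: assuming $\widetilde{\lambda_n}P(u_n)\to-c<0$, one exploits that $\omega\mapsto K_\omega((u_n)_{\widetilde{\lambda_n}})$ is quadratic in $\sqrt\omega$ to locate an $\omega_0$ with $K_{\omega_0}((u_n)_{\widetilde{\lambda_n}})=0$ while $S_{\omega_0}((u_n)_{\widetilde{\lambda_n}})<\mu_{\omega_0}$, contradicting the definition of $\mu_{\omega_0}$; this is where the freedom in $\omega$ that you flag as ``bookkeeping'' is actually spent, and it is not a normalization choice but the engine of the proof. Finally, your worry about the endpoint space $X_{\omega,2\sqrt\omega}$ is real only for $b=0$ (for $b>0$ one has $\kappa_0<1$ and $X_{\omega,2\kappa_0\sqrt\omega}=H^1$); the paper resolves it by upgrading the $\dot H^1\cap L^4$ convergence to strong $L^2$ convergence using weak convergence plus convergence of the $L^2$ norms, a step your proposal leaves open.
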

We note that
$$\|\partial_x\phi_{\omega, 2\kappa_0\sqrt{\omega}}\|_{L^2}^2=\omega\|\phi_{\omega, 2\kappa_0\sqrt{\omega}}\|_{L^2}^2-{b\over3}\|\phi_{\omega, 2\kappa_0\sqrt{\omega}}\|_{L^6}^6$$
from $E(\phi_{\omega, 2\kappa_0\sqrt{\omega}})=P(\phi_{\omega, 2\kappa_0\sqrt{\omega}})=0$ and $K(\phi_{\omega, 2\kappa_0\sqrt{\omega}})=0$.
\begin{proof}
	For simplicity of the notations, we use $S_{\omega}$, $K_{\omega}$, $\mu_{\omega}$ instead of $S_{\omega,2\kappa_0\sqrt{\omega}}$, $K_{\omega, 2\kappa_0\sqrt{\omega}}$, $\mu_{\omega, 2\kappa_0\sqrt{\omega}}$. We have
	$$K_{\omega}(u_\lambda)=\lambda^2(-\|\partial_x u\|_{L^2}^2-{b\over 3}\|u\|_{L^6}^6)+4\lambda^2E(u)+\omega M(u)+2\kappa_0\sqrt{\omega}\lambda P(u).$$
	Let $\lambda_0^2={\omega\|\phi_{\omega, 2\kappa_0\sqrt{\omega}}\|_{L^2}^2\over \|\partial_x u\|_{L^2}^2+{b\over 3}\|u\|_{L^6}^6}$, then we have
	\begin{align*}
		K_{\omega}(u_{\lambda_0})
		&=4\lambda_0^2E(u)+\omega (M(u)-M^*)+2\kappa_0\sqrt{\omega}\lambda_0 P(u),
		\\
		2S_{\omega}(u_{\lambda_0})
		&=2\lambda_0^2E(u)+\omega M(u)+2\kappa_0\sqrt{\omega}\lambda_0P(u).
	\end{align*}
	Our goal is proving that
	\begin{align}\label{3proofGoal}
		|K_{\omega}(u_{\lambda_0})|<\epsilon_0,\quad \text{and} \quad |2S_{\omega}(u_{\lambda_0})-\omega M^*|<\epsilon_0
	\end{align}
	for any sufficiently small $\epsilon_0$. We note that $\omega M^*=2\mu_\omega$. Then, by applying Proposition \ref{prop uniqueness}, we conclude the proof.
	
	We claim that for any $\epsilon>0$, there exists $\delta>0$ such that
	\begin{align}\label{theorem close proof 1}
		-\epsilon<\lambda_0^2 E(u)
	\end{align}
	if $M^*<M(u)<M^*+\delta$ and $E(u),P(u)\leq0$. We note that $\lambda_0\sim 1/ \|\partial_x u\|_{L^2}$.
	Suppose that the claim is not true. Then, there exists a sequence $\{u_n\}$ and $c>0$ such that 
	\begin{align}\label{theorem close proof 1-1}
		{E(u_n)\over \|\partial_x u_n\|_{L^2}^2} \to-c, \quad M(u_n)\downarrow M^*, \quad P(u_n)\leq 0.
	\end{align}
	Then, there exists $\lambda_n>0$ so that $M^*=M(\lambda_nu_n)=\lambda_n^2M(u_n)$, and this implies $\lambda_n\uparrow 1$ and $P(\lambda_nu_n)\leq 0$. Thus, we have $E(\lambda_nu_n)\geq 0$ by Theorem \ref{theorem minimum}. We have
	\begin{align}\label{theorem close proof 1-2}
		0\leq {E(\lambda_nu_n)\over \|\partial_x \lambda_nu_n\|_{L^2}^2}
		=
		{1\over 2}-{\lambda_n^2\over4}{N_1(u_n)\over\|\partial_x u_n\|_{L^2}^2}-{b\lambda_n^4\over 6}{N_2(u_n)\over \|\partial_x u_n\|_{L^2}^2},
	\end{align}
	where $N_1$ and $N_2$ are given by \eqref{2N1definition} and \eqref{2N2definition}.
	By \eqref{theorem close proof 1-1} and \eqref{theorem close proof 1-2}, we have that, for sufficiently large $n$,
	$$-{1\over4}{N_1(u_n)\over \|\partial_x u_n\|_{L^2}^2}-{b\over 6}{N_2(u_n)\over \|\partial_x u_n\|_{L^2}^2}\leq -{1\over2}-{2\over 3}\epsilon_1$$
	for sufficiently small $\epsilon_1<c$.
	Since $\lambda_n\uparrow 1$, there exists $N$ such that $1>\lambda_n^2>\lambda_n^4>1-\epsilon_1$ for $n>N$. If $N_1(u_n)\geq0$ for infiniely many $n$, we obtain
	$$-{\lambda_n^2\over4}{N_1(u_n)\over \|\partial_x u_n\|_{L^2}^2}-{b\lambda_n^4\over 6}{N_2(u_n)\over \|\partial_x u_n\|_{L^2}^2}\leq (1-\epsilon_1)\left(-{1\over2}-{2\over 3}\epsilon_1\right)=-{1\over2}-{1\over 6}\epsilon_1 +{2\over 3}\epsilon_1^2$$
	after passing a subsequence if necessary. Combining this and \eqref{theorem close proof 1-2},
	$0\leq -{1\over 6}\epsilon_1 +{2\over 3}\epsilon_1^2$ and this is contradiction for small $\epsilon_1>0$. If $N_1(u_n)\leq0$ for infiniely many $n$, then by \eqref{theorem close proof 1-1} and \eqref{theorem close proof 1-2}, along the subsequence, we have
	\begin{align}\label{theorem close proof 1-3}
		\lambda_n^4c-\epsilon_2\leq{1\over2}(1-\lambda_n^4)+{\lambda_n^4-\lambda_n^2\over 4}{N_1(u_n)\over \|\partial_x u_n\|_{L^2}^2}
	\end{align}
	for sufficiently small $\epsilon_2>0$ and large $n$.
	Taking $n\to \infty$, we have $\lambda_n \to 1$, and \eqref{theorem close proof 1-3} becomes, for sufficiently small $0<\epsilon_2<c$,
	\begin{align}\label{theorem close proof 1-4}
		{N_1(u_n)\over \|\partial_x u_n\|_{L^2}^2}\to -\infty.
	\end{align}
	However, H\"older inequality, Gagliardo–Nirenberg inequality, and $M(u_n)\leq 2 M^*$ for large $n$, we have
	$${|N_1(u_n)|\over \|\partial_x u_n\|_{L^2}^2}\lesssim {M^*}^2,$$
	and this is condtradiction with \eqref{theorem close proof 1-4}.
	Therefore, we have \eqref{theorem close proof 1}. 
	
	Now, again using the contradiction, we will show that
	\begin{align}\label{theorem close proof P}
		-\epsilon<\lambda_0P(u).
	\end{align}
	We use the idea of (\cite{Hayashi2021}, Theorem 1.7-(vi)) to prove \eqref{theorem close proof P}.
	Let $\widetilde{\lambda_n}$ be
	\begin{align}\label{3defTildeLambda}
		\widetilde{\lambda_n}^2=\frac{\lambda_0^2}{\omega \|\phi_{1, 2\kappa_0}\|_{L^2}^2}=\frac{1}{\|\partial_x u_n\|_{L^2}^2+{b\over 3}\|u_n\|_{L^6}^6}.
	\end{align} 
	If \eqref{theorem close proof P} is not true, we can find again a sequence $\{u_n\}$ and $c>0$ such that
	\begin{align*}
		M(u_n)\downarrow M^*, \quad \widetilde{\lambda_n}^2E(u_n) \uparrow 0, \quad
		\widetilde{\lambda_n}P(u_n)\to -c.
	\end{align*}
	We remark that the condition
	\begin{align*}
		\widetilde{\lambda_n}E(u_n) \uparrow 0
	\end{align*}
	comes from \eqref{theorem close proof 1}.
	Then, we have
	\begin{align*}
		K_{\omega}((u_n)_{\widetilde{\lambda_n}})&=-1+4\widetilde{\lambda_n}^2E(u_n)+\omega M(u_n)+2\kappa_0\sqrt{\omega}\widetilde{\lambda_n}P(u_n),
		\\
		2S_{\omega}((u_n)_{\widetilde{\lambda_n}})&=2\widetilde{\lambda_n}^2E(u_n)+\omega M(u_n)+2\kappa_0\sqrt{\omega}\widetilde{\lambda_n}P(u_n).
	\end{align*}
	We note that
	\begin{align*}
		2S_{\omega}((u_n)_{\widetilde{\lambda_n}})=K_{\omega}((u_n)_{\widetilde{\lambda_n}})+1-2\widetilde{\lambda_n}^2E(u_n).
	\end{align*}
	For any sufficiently small $\epsilon_3>0$, there exists $N$ such that if $n>N$, then 
	\begin{align*}
		-\epsilon_3<\widetilde{\lambda_n}^2E(u_n)\leq0, 
		\quad 0<M(u_n)-M^*<\epsilon_3, 
		\quad -\epsilon_3<\widetilde{\lambda_n}P(u_n)+c<\epsilon_3.
	\end{align*}
	We deduce that
	\begin{align*}
		-1-(4+2\kappa_0\sqrt{\omega})\epsilon_3+\omega M^*
		<
		K_{\omega}((u_n)_{\widetilde{\lambda_n}})
		<
		-1+(2\kappa_0\sqrt{\omega}+\omega)\epsilon_3+\omega M^*.
	\end{align*}
	We can check that $K_{\omega}((u_n)_{\widetilde{\lambda_n}})<0$ for sufficiently small $\omega$ and $K_{\omega}((u_n)_{\widetilde{\lambda_n}})>0$ for sufficiently large $\omega$. Therefore, there exists a $\omega_0>0$ such that $K_{\omega_0}((u_n)_{\widetilde{\lambda_n}})=0$. Moreover, since $K_{\omega}((u_n)_{\widetilde{\lambda_n}})$ is a quadratic function for $\sqrt{\omega}$, $\omega_0$ is unique on $\omega \in (0,\infty)$. We have
	\begin{align*}
		2S_{\omega_0}((u_n)_{\widetilde{\lambda_n}})=1-2\widetilde{\lambda_n}^2E(u_n)<1+2\epsilon_3.
	\end{align*}
	If $1+2\epsilon_3<\omega_0 M^*=2\mu_{\omega_0}$, then we obtain
	\begin{align*}
		K_{\omega_0}((u_n)_{\widetilde{\lambda_n}})=0\quad \text{and} \quad  S_{\omega_0}((u_n)_{\widetilde{\lambda_n}})<\mu_{\omega_0}.
	\end{align*}  
	This leads to a contradiction from the definition of $\mu_\omega$. Since  $K_{\omega}((u_n)_{\widetilde{\lambda_n}})$ is a quadratic function for $\sqrt{\omega}$, it sufficies to show that  $K_{\omega_1}((u_n)_{\widetilde{\lambda_n}})<0$ for $\omega_1=\frac{1+2\epsilon_3}{M^*}$. We have
	\begin{align*}
		K_{\omega_1}((u_n)_{\widetilde{\lambda_n}})
		&<-1+\frac{(1+2\epsilon_3)(M^*+\epsilon_3)}{M^*}+\frac{2\kappa_0\sqrt{1+2\epsilon_3}}{\sqrt{M^*}}(-c+\epsilon_3)
		\\
		&=\left(2+\frac{1+2\epsilon_3}{M^*}+\frac{2\kappa_0\sqrt{1+2\epsilon_3}}{\sqrt{M^*}}\right)\epsilon_3-\frac{2\kappa_0\sqrt{1+2\epsilon_3}}{\sqrt{M^*}}c.
	\end{align*}
	Taking $\epsilon_3$ sufficiently small, we obtain $K_{\omega_1}((u_n)_{\widetilde{\lambda_n}})<0$. Therefore, we can find $\omega_0$ such that 
	\begin{align*}
		K_{\omega_0}((u_n)_{\widetilde{\lambda_n}})=0 \quad \text{and} \quad S_{\omega_0}((u_n)_{\widetilde{\lambda_n}})<\mu_{\omega_0}.
	\end{align*}
	Then, we deduce the contradiction from the definition of $\mu_{\omega}$. Thus, we have \eqref{theorem close proof P}. Finally, we have
	\begin{align*}
		-(4+2\kappa_0\sqrt{\omega})\epsilon<K_{\omega}(u_{\lambda_0})<\omega\delta,
		\\
		-(1+\kappa_0\sqrt{\omega})\epsilon<S_{\omega}(u_{\lambda_0})-\mu_{\omega}<\frac{\omega\delta}{2}.
	\end{align*}
	Therefore, taking $\delta=\epsilon$, we deduce \eqref{3proofGoal}. Thanks to Proposition \ref{prop uniqueness}, we conclude the proof when $b>0$. For $b=0$, we still have only that
	\begin{align*}
		\inf_{(\theta, y)\in\mathbb{R}^2}
		\left\|e^{-i\frac{cx}{2}}\left(
		\phi_{\omega,2\kappa_0\sqrt{\omega}}
		-e^{i\theta}u_{\lambda_0}(\cdot -y)
		\right)\right\|_{\dot{H}^1\cap L^4}\to 0
	\end{align*}
	as $\delta \to 0$. We see that for any $\psi \in C^\infty_c$
	\begin{align}\label{3maintheoremProofWeakconverge}
		\int (\phi_{\omega, 2\kappa_0\sqrt{\omega}}-e^{i\theta_0}u_{\lambda_0}(\cdot-y_0))\psi dx \to 0\quad \text{as} \quad \delta \to 0,
	\end{align}
	for some $\theta_0=\theta_0(\delta) \in \mathbb{R}$ and $y_0=y_0(\delta) \in \mathbb{R}$, since we have $L^4$ convergence. Moreover, from $\phi_{\omega, 2\kappa_0\sqrt{\omega}}-e^{i\theta_0}u_{\lambda_0}(\cdot-y_0) \in L^2$ and \eqref{3maintheoremProofWeakconverge}, $\phi_{\omega, 2\kappa_0\sqrt{\omega}}-e^{i\theta_0}u_{\lambda_0}(\cdot-y_0)$ weakly $L^2$ converges to $0$ as $\delta\to 0$. From $\|u_{\lambda_0}\|_{L^2}^2\to \|\phi_{\omega, 2\kappa_0\sqrt{\omega}}\|_{L^2}^2=M^*$ as $\delta\to 0$, we conclude strong $L^2$ convergence, and we have
	\begin{align*}
		\inf_{(\theta, y)\in\mathbb{R}^2}\left\|\phi_{\omega, 2\kappa_0\sqrt{\omega}}-e^{i\theta}u_{\lambda_0}(\cdot -y)\right\|_{L^2}\to 0
	\end{align*}
	as $\delta\to 0$ when $b=0$.
\end{proof}
Since $M(u(t))$, $E(u(t))$, and $P(u(t))$ are conserved, we can deduce Theorem \ref{MainTheorem} by applying Proposition \ref{theroem decompose} for each $t\in I_{\text{max}}$. As the final step in this paper, we will prove Corollary \ref{MainCorollary}.
\begin{proof}[Proof of Corollary \ref{MainCorollary}] 
	We will prove this corollary in a similar way to the proof of Proposition \ref{theroem decompose}.
	Since we still have $P(u_0)\leq 0$, we have
	\begin{align}\label{Cor close proof 1}
		-\epsilon<\lambda_0^2 E(u).
	\end{align}
	Using the contradiction, we will show that
	\begin{align}\label{Cor close proof P}
		-\epsilon<\lambda_0P(u).
	\end{align}
	We redefine $\widetilde{\lambda_n}$ as 
	\begin{align*}
		\widetilde{\lambda_n}^2=\frac{\lambda_0^2M(u_n)}{\omega \|\phi_{1, 2\kappa_0}\|_{L^2}^2}=\frac{M(u_n)}{\|\partial_x u_n\|_{L^2}^2+{b\over 3}\|u_n\|_{L^6}^6}.
	\end{align*} 
	If \eqref{Cor close proof P} is not true, we can find a sequence $\{u_n\}$ and $c>0$ such that
	\begin{align*}
		M(u_n)\downarrow M^*, \quad
		\frac{\widetilde{\lambda_n}P(u_n)}{M(u_n)}\to -c, \quad \frac{\widetilde{\lambda_n}^2E(u_n)}{M(u_n)} \to ac^2,
	\end{align*}
	for some $0\leq a < \kappa_0\sqrt{1+\kappa_0^2}-\kappa_0^2$.
	We remark that the conditions $0\leq a$ and $a < \kappa_0\sqrt{1+\kappa_0^2}-\kappa_0^2$ comes from \eqref{Cor close proof 1} and initial data assumption respectively. We also note that
	\begin{align*}
		\frac{|\widetilde{\lambda_n}P(u_n)|}{M(u_n)}\leq\frac{|P(u_n)|}{\|u_n\|_{\dot{H}^1}M(u_n)^{1/2}}\leq 1
	\end{align*}
	by H\"older inequality.	This means that $0< c\leq 1$. Again, we have
	\begin{align*}
		\frac{K_{\omega}((u_n)_{\widetilde{\lambda_n}})}{M(u_n)}=-1+4\frac{\widetilde{\lambda_n}^2E(u_n)}{M(u_n)}+\omega +2\kappa_0\sqrt{\omega}\frac{\widetilde{\lambda_n}P(u_n)}{M(u_n)}.
	\end{align*}
	and
	\begin{align*}
		2\frac{S_{\omega}((u_n)_{\widetilde{\lambda_n}})}{M(u_n)}&=2\frac{\widetilde{\lambda_n}^2E(u_n)}{M(u_n)}+\omega +2\kappa_0\sqrt{\omega}\frac{\widetilde{\lambda_n}P(u_n)}{M(u_n)}.
	\end{align*}
	For ease of calculation, we take $n\to \infty$. We have
	\begin{align*}
		\lim_{n\to \infty}\frac{K_{\omega}((u_n)_{\widetilde{\lambda_n}})}{M(u_n)}
		&=K_{\infty,\omega}=-1+4ac^2+\omega -2\kappa_0\sqrt{\omega}c,
		\\
		\lim_{n\to \infty}2\frac{S_{\omega}((u_n)_{\widetilde{\lambda_n}})}{M(u_n)}
		&=2S_{\infty,\omega}=2ac^2+\omega -2\kappa_0\sqrt{\omega}c.
	\end{align*}
	Similarly with the proof of Proposition \ref{theroem decompose}, it sufficies to show that there exists a $\omega_0>0$ such that 
	\begin{align*}
		K_{\infty,\omega_0}<0\quad \text{and} \quad 2S_{\infty,\omega_0}=\omega_0
	\end{align*}
	for any $0<c\leq1$. 
	From $2S_{\infty,\omega_0}=\omega_0$, we have
	\begin{align*}
		\omega_0=\frac{a^2c^2}{\kappa_0^2},
	\end{align*}
	and
	\begin{align*}
		K_{\infty,\omega_0}=-1+2ac^2+\frac{a^2c^2}{\kappa_0^2}
		=\left(\frac{a}{\kappa_0}+\kappa_0 \right)^2c^2-c^2\kappa_0^2-1.
	\end{align*}
	Since $0\leq a < \kappa_0\sqrt{1+\kappa_0^2}-\kappa_0^2$, we have $K_{\infty,\omega_0}<0$ for any $0<c\leq1$. This implies that for sufficiently large $n$, there exists $\omega$ so that
	\begin{align*}
		K_{\omega}((u_n)_{\widetilde{\lambda_n}})=0, \quad \text{and} \quad 2S_{\omega}((u_n)_{\widetilde{\lambda_n}})<\mu_\omega.
	\end{align*}
	Therefore, we deduce a contradiction, and we have \eqref{Cor close proof P}. Moreover, we have $\lambda_0^2E(u)<(\kappa_0\sqrt{1+\kappa_0^2}-\kappa_0^2) (\lambda_0P(u))^2 \lesssim \epsilon ^2$, and this implies that we can apply Proposition \ref{2propMinimization}. Thus, the rest of the proof is identical to the proof of Proposition \ref{theroem decompose}.
\end{proof}
\begin{remark}\label{3remark}
	With our arguments, it seems to be impossible to extend the condition $E(u_0)<(\kappa_0\sqrt{1+\kappa_0^2}-\kappa_0^2)\frac{(P(u_0))^2}{M(u_0)}$. In the proof of Corollary \ref{MainCorollary}, we cannot lead the contradiction for some $0<c\leq 1$ when $a\geq \kappa_0\sqrt{1+\kappa_0^2}-\kappa_0^2$.
\end{remark}
\begin{remark}\label{3remark2}
	Let $u_0=e^{irx}\phi_{1,2\kappa_0}$. We can check that $E(u_0)=c(r)\frac{(P(u_0))^2}{M(u_0)}$, $M(u_0)=M^*$, and $P(u_0)<0$ with $c(r)\downarrow \frac{1}{2}$ as $r\to \infty$. However, $u_0$ is not solition $\phi_{1,2\kappa_0}$ up to the symmetries. This implies that the conclusion of Proposition \ref{theroem decompose} fails when $E(u_0)>\frac{1}{2}\frac{(P(u_0))^2}{M(u_0)}$.
\end{remark}

\noindent\textbf{Acknowledgement.}
The author appreciates Soonsik Kwon for helpful discussions and encouragement to this work. The author is partially supported by the National Research Foundation of Korea(NRF) grant funded by the Korea government(MSIT) (NRF-2019R1A5A1028324).

\bibliographystyle{plain} 
\bibliography{reference}

\end{document}